\documentclass{article}
\usepackage{graphicx}
\usepackage{mathtools}
\usepackage{unicode-math}

\usepackage{amsmath}
\usepackage{cases}
\usepackage{comment}
\usepackage{enumitem}
\usepackage{pifont}
\usepackage{proof}
\usepackage{qtree}
\usepackage{subcaption}
\usepackage{siunitx}
\usepackage{url}
\usepackage{xcolor}
\definecolor{1f1e33}{HTML}{1F1E33}
\definecolor{mediumblue}{HTML}{0000CD}

\usepackage{tikz}
\usetikzlibrary{automata}
\usetikzlibrary{arrows.meta}
\usetikzlibrary{bending}
\usetikzlibrary{positioning}

\usepackage{amsthm}
\theoremstyle{definition}
\newtheorem{definition}{Definition}[section]
\newtheorem{example}[definition]{Example}
\newtheorem{proposition}[definition]{Proposition}
\newtheorem{lemma}[definition]{Lemma}
\newtheorem{theorem}[definition]{Theorem}
\newtheorem{corollary}[definition]{Corollary}

\usepackage[
  style=numeric, % 数字形式（plainに近い）
  backend=biber, % 内部処理エンジンをbiberに（推奨）
  maxbibnames=99, % 著者名を全員表示する
  doi=true,      % DOIを表示する
  url=true       % URLを表示する
]{biblatex}
\DeclareFieldFormat*{title}{#1\isdot} % titleを""で囲わない
\DeclareFieldFormat[book]{title}{\mkbibemph{#1\isdot}} % 本のタイトルも斜体
\renewbibmacro{in:}{
  \ifentrytype{article}
    { }% articleの場合はIn:を削除（空にする）
    {\printtext{\bibstring{in}\intitlepunct}}% それ以外（本など）は出力する
}

\usepackage{listings}
\usepackage[colorlinks=true, hidelinks]{hyperref}
\usepackage{nameref}

\newcommand{\Prop}{\symbf{Prop}}

\newcommand{\powerset}{\symcal{P}}
\renewcommand{\Box}{\symbol{"25A1}}
\newcommand{\Dia}{\symbol{"25CA}}
\newcommand{\notmodels}{\nvDash}
\newcommand{\notvdash}{\nvdash}
\newcommand{\Kmodel}{\symcal{M}}
\newcommand{\Kframe}{\symcal{F}}

\newcommand{\sfset}{\mathsf{Sub}}
\newcommand{\truthset}[1]{\lBrack #1 \rBrack}
\newcommand{\MF}{\mathbf{MF}}
\newcommand{\cl}{\mathrm{cl}}
\newcommand{\logicof}{\mathsf{Log}}

\title{Closure Semantics for Weak Modal Logics: \\ Completeness and Representations}
\author{Yuki Nishimura
    \thanks{
        Department of Computer Science, Institute of Science Tokyo,
        2-12-1 Ookayama, Meguro-ku, Tokyo, Japan.
        Email: \texttt{yuki.nishimura@comp.isct.ac.jp}.
        ORCID: \href{https://orcid.org/0009-0000-0475-1168}{0009-0000-0475-1168}.
      }
}
\date{}
\begin{document}
\maketitle

\begin{abstract}
    Weak modal logics can be obtained by restricting the axioms and inference rules governing the modal operator.
    At the extreme, one may consider the least modal logic, in which no specifically modal axiom or inference rule is imposed.
    We introduce closure semantics, a new possible-world semantics for such weak modal logics, and establish soundness and strong completeness for a range of systems.
    We also give truth-preserving translations between closure models and other possible-world semantics, including neighborhood, FMT, and Kripke semantics.
\end{abstract}

\section{Introduction}

What is the weakest modal logic?
A standard answer to this question may be the modal logic $\mathbf{K}$.
Strictly speaking, however, $\mathbf{K}$ is the weakest \emph{normal} modal logic, and weaker modal logics arise once the requirements of normality are dropped.
For example, the classical modal logic $\mathbf{E}$ is obtained by requiring only replacement of equivalents for the modal operator.
Also, the pure logic of necessitation $\mathbf{N}$ is obtained by requiring only the generalization rule (also known as the necessitation rule).
At the extreme, we arrive at a modal logic with no specifically modal axiom or inference rule, in which formulas containing the modal operator arise only through uniform substitution.
We call this least system $\mathbf{ML}_0$.
The relation of this approach to earlier studies of weak modal logics is discussed in the next subsection.

The aim of this paper is to provide a uniform possible-world semantics for this lower end of the modal hierarchy.
We introduce \emph{closure models}, in which each world $w$ is assigned a set $G(w)$ of modal formulas, called its \emph{generator}, together with an operation $\cl$ on sets of modal formulas.
In this framework, we interpret the modal clause as follows:
\[
    \Kmodel, w \models \Box \varphi \iff \varphi \in \cl(G(w)).
\]
No general closure condition, such as extensivity, monotonicity, or idempotence, is imposed on $\cl$.
Conditions on the sets $\cl(G(w))$ are instead used to capture particular modal principles.

This framework provides a uniform semantics for a range of weak modal logics.
We introduce semantic conditions corresponding to replacement of equivalents, generalization, and the axiom schemata $\mathsf{M}$, $\mathsf{C}$, $\mathsf{N}$, and $\mathsf{K}$, and prove soundness and strong completeness for every combination of these principles.
In particular, $\mathbf{ML}_0$ receives a possible-world semantics with no condition on the modal operator, while logics without modal inference rules also enjoy the finite model property and primitive-recursive decidability.
We also compare closure semantics with neighborhood, FMT, and Kripke semantics by means of truth-preserving model-to-model translations.
For each of these frameworks, we identify conditions on closure models under which translations exist in both directions.
In the Kripke case, the required model-theoretic condition is stronger than the conditions corresponding to the usual axiomatization of $\mathbf{K}$, although both determine the same modal logic.

The technical starting point of this work is the closure semantics of Kogure and Kurahashi \cite{kogure20XX}, developed for non-normal provability logics.
Their separation of a finite generator $G(w)$ from a possibly infinite closure provides an effective presentation of countermodels.
We abstract this idea from its arithmetical setting, impose no general closure-operator conditions on $\cl$, and use the resulting framework as a general semantics for weak modal logics.

\subsection{Related Work}

An early example of interest in weak modal systems appears in Lemmon's study of the Lewis systems \cite{lemmon1957}.
In particular, his system $\mathbf{S0.5}$ restricts necessitation rather than adopting the unrestricted rule familiar from normal modal logics.
Although $\mathbf{S0.5}$ is not a predecessor of the least modal logic $\mathbf{ML}_0$, Lemmon's work already illustrates an early interest in weakening the deductive principles governing the modal operator.

The least modal logic considered here has appeared under several names.
Makinson used the systems $\mathbf{S}$ and $\mathbf{L}_0$ as minimal points from which stronger modal systems could be studied \cite{makinson1971,makinson1973}, while Segerberg used $\mathbf{PC}$ as the propositional basis of his study of classical modal logic \cite{segerberg1971}.
Within Segerberg's class of classical modal logics, the weakest system is $\mathbf{E}$, since classical modal logics are required to respect replacement of equivalents.
Wansing later considered the minimal system, again under the name $\mathbf{S}$, in connection with semantics involving non-normal or impossible worlds \cite{wansing1989}.
Humberstone also discusses the system obtained by adding a modal operator to classical propositional logic without imposing any specific modal axioms or rules, regarding it as the broadest starting point for the study of modal logics \cite{humberstone2016}.
These works show that the least system has repeatedly served as the bottom of larger families of modal logics, even though it has been the primary object of semantic investigation in its own right less often.

A more recent approach is provided by non-deterministic semantics.
Gr\"atz develops non-deterministic semantic methods for modal logics and, in particular, gives a two-valued non-deterministic semantics for the least modal logic, in which the modal operator is completely unconstrained \cite{gratz2022}.
Building on this semantics, Omori and Skurt introduce a four-valued non-deterministic semantics for the same logic and prove its soundness and completeness \cite{omori2024}.
More recently, Coniglio, Paw{\l}owski, and Skurt developed restricted non-deterministic matrices (RNmatrices) as a uniform semantics for the minimal modal logic $\mathbf{M}$ and a wide range of its normal and non-normal extensions \cite{coniglio2025}.
This approach and the present one share the aim of assigning a semantics to the modal operator before stronger modal principles are imposed.
The main difference is structural: the non-deterministic approach is based on generalized truth-value assignments and does not require possible worlds, whereas closure semantics remains a possible-world semantics and assigns sets of formulas to individual worlds.
This makes it possible to compare closure models directly with other possible-world frameworks by model-to-model translations.

The pure logic of necessitation $\mathbf{N}$ was studied by Fitting, Marek, and Truszczyński, who introduced a relational semantics for this weak modal logic \cite{fitting1992}.
In their semantics, a separate accessibility relation $R_\varphi$ is associated with each modal formula $\varphi$.
This semantics validates necessitation without requiring the ordinary distribution axiom $\mathsf{K}$.
As shown below, its formula-dependent relational structure can be translated directly into closure semantics, and the exact corresponding condition on closure models is $(G)$.
Thus, closure semantics places $\mathbf{N}$ and the classical modal logic $\mathbf{E}$ within a common semantic framework, even though the two logics are incomparable.

Closure semantics is also closely related to the syntactic approach to knowledge discussed by Fagin, Halpern, Moses, and Vardi \cite{fagin1995}.
In a syntactic structure, the formulas known or believed by an agent at a state are represented directly by a set of formulas, and modal truth is determined by membership in this set.
At a purely semantic level, a closure model can similarly be collapsed by putting
\[
    H(w) = \cl(G(w))
\]
and interpreting $\Box\varphi$ by membership of $\varphi$ in $H(w)$.
The distinction between $G$ and $\cl$ is therefore not required merely to assign truth conditions to modal formulas.
It is nevertheless useful for the purposes pursued here.
On the structural side, it allows modal principles to be expressed systematically as conditions on the generated sets $\cl(G(w))$ and hence to be studied through frame definability.
On the computational side, a finite generator may provide a finite presentation of an infinite set $\cl(G(w))$, and an effective closure operation can therefore support finite countermodel constructions and decision procedures.

\subsection{Organization}
Section \ref{sec:closure-model} introduces the basic framework of closure semantics and establishes the correspondence between structural conditions on closure frames and familiar modal axioms and inference rules.
Section \ref{sec:logic} studies the modal logics determined by these conditions and establishes their soundness and strong completeness.
The discussion is divided between two subsections: logics for which the finite model property and primitive-recursive decidability are also obtained, and those for which only completeness is established here.
Section \ref{sec:representation} compares closure semantics with neighborhood, FMT, and Kripke semantics by means of truth-preserving model-to-model translations.
In the Kripke case, such a representation requires the additional condition \textsf{(KRep)}, which characterizes when a closure model can be represented by an ordinary Kripke model.
Finally, Section \ref{sec:prospects} discusses several directions for the further development of closure semantics.

\section{Closure Models}
\label{sec:closure-model}

In this section, we introduce closure models for modal logic.
Like Kripke models and neighborhood models, closure models are based on a set of possible worlds.
The additional structure consists of two components: a generator $G$, which assigns a set of modal formulas to each world, and a closure $\cl$, which maps sets of modal formulas to sets of modal formulas.
The modal operator is interpreted by membership in this generated set: a formula $\Box\varphi$ is true at a world $w$ exactly when $\varphi\in\cl(G(w))$.

We first define the language and the basic structure of closure models, and then introduce the satisfaction relation and the corresponding notions of validity and semantic consequence.

\begin{definition}
    Fix a set $\Prop$ of \emph{propositional variables}. We define modal formulas $\varphi$ inductively as follows: 
    \[
        \varphi ::= p \mid \top \mid \neg \varphi \mid \varphi \to \varphi \mid \Box \varphi,
    \]
    where $p \in \Prop$. We write $\MF$ for the set of modal formulas.
\end{definition}
We use other Boolean connectives $\bot, \land, \lor, \leftrightarrow$ as abbreviations in the usual way. Moreover, we can define a modal operator $\Dia$ as $\Dia \varphi = \neg \Box \neg \varphi$.

\begin{definition}
    A \emph{closure frame} $\Kframe$ is a tuple $(W, G, \cl)$ where:
    \begin{itemize}
        \item $W$ is a non-empty set of \emph{possible worlds}.
        \item $G: W \to \powerset(\MF)$ is a \emph{generator}.
        \item $\cl: \powerset(\MF) \to \powerset(\MF)$ is a \emph{closure}.
    \end{itemize}
    A closure model $\Kmodel$ is a tuple $(\Kframe, V)$ where $\Kframe$ is a closure frame and $V: \Prop \to \powerset(W)$ is a \emph{valuation function}.
\end{definition}
If a frame $\Kframe$ or a model $\Kmodel$ is based on a set $W$ and $w \in W$, we write $w \in \Kframe$ or $w \in \Kmodel$ instead.
We call a closure frame or a closure model \emph{finite} if $W$ is finite and $G(x)$ is finite for all $x \in W$.

We use the term \emph{closure} in a broad sense.
In contrast to the closure operators considered by Kogure and Kurahashi \cite{kogure20XX}, no conditions such as $X \subseteq \cl(X)$, monotonicity, or idempotence are imposed on $\cl$.
Thus, $\cl$ is simply an operator on sets of modal formulas, rather than a closure operator in the usual sense.

\begin{definition}
    Given a closure model $\Kmodel$, a world $w \in \Kmodel$, and $\varphi \in \MF$, we define $\Kmodel, w \models \varphi$ as follows:
    \begin{align*}
        \Kmodel, w \models \top &\text{ always} \\
        \Kmodel, w \models p &\iff w \in V(p) \text{ where } p \in \Prop \\
        \Kmodel, w \models \neg \varphi &\iff \text{not } \Kmodel, w \models \varphi \ (\Kmodel, w \notmodels \varphi) \\
        \Kmodel, w \models \varphi \to \psi &\iff \Kmodel, w \notmodels \varphi \text{ or } \Kmodel, w \models \psi \\
        \Kmodel, w \models \Box \varphi &\iff \varphi \in \cl(G(w)).
    \end{align*}
    If $\Kmodel$ under consideration is clear from the context, we simply write $w \models \varphi$.
\end{definition}
For a set $\Gamma$ of formulas, if we have $w \models \gamma$ for all $\gamma \in \Gamma$, we simply write $w \models \Gamma$.

\begin{definition}
    We define the following notions of validity and semantic consequence. Here, $\varphi$ is a modal formula and $\Gamma$ is a set of formulas:
    \begin{itemize}
        \item A formula $\varphi$ is \emph{valid in a model $\Kmodel$}, denoted by $\Kmodel \models \varphi$, if $\Kmodel, w \models \varphi$ for all $w \in \Kmodel$. Moreover, a formula $\varphi$ is \emph{valid in a frame $\Kframe$}, denoted by $\Kframe \models \varphi$, if $(\Kframe, V), w \models \varphi$ for all $V$ and $w \in \Kframe$.
        \item A formula $\varphi$ is \emph{valid}, denoted by $\models \varphi$, if $\Kmodel, w \models \varphi$ for all $\Kmodel$ and $w \in \Kmodel$. 
        \item \emph{A set $\Gamma$ semantically entails $\varphi$}, denoted by $\Gamma \models \varphi$, if for all $\Kmodel$ and $w \in \Kmodel$, $\Kmodel, w \models \Gamma$ implies $\Kmodel, w \models \varphi$.
        \item Given a class $\mathbb{C}$ of closure frames, \emph{$\Gamma$ semantically entails $\varphi$ over $\mathbb{C}$}, denoted by $\Gamma \models_{\mathbb{C}} \varphi$, if for any $\symcal{F} \in \mathbb{C}$, $V$, and $w \in \Kframe$ $(\symcal{F}, V), w \models \Gamma$ implies $(\symcal{F}, V), w \models \varphi$.
    \end{itemize}
\end{definition}
We write $\logicof(\Kframe)$ to mean the set of formulas $\varphi$ such that $\Kframe \models \varphi$ and $\logicof(\Kmodel)$ to mean the set of formulas $\varphi$ such that $\Kmodel \models \varphi$.

\subsection{Frame Definability}
\label{subsec:frame-definability}
One advantage of closure semantics is that modal principles can be represented as structural properties of the sets $\cl(G(w))$ associated with worlds.
In this subsection, we introduce conditions corresponding to the inference rules \textsf{RE} and \textsf{Gen} and to the axiom schemata \textsf{M, C, N,} and \textsf{K}.
We then verify that these semantic conditions are exactly characterized by the corresponding rules and axiom schemata.

\begin{definition}
    We define the properties \textsf{(E)}, \textsf{(G)}, \textsf{(M)}, \textsf{(C)}, \textsf{(N)}, \textsf{(K)} of closure frames in Figure \ref{fig:property-closure-frame}.
\end{definition}
All the properties introduced above are formulated as properties of closure frames.
This follows the usual treatment of definability in Kripke and neighborhood semantics, where modal axioms and inference rules are associated with frame properties rather than with particular valuations.
Accordingly, the correspondence result below is stated in terms of validity on closure frames.

\begin{figure}
    For any $w \in \Kframe$ and $\varphi, \psi \in \MF$,
    \begin{description}
        \item[\textsf{(E)}] $\Kframe \models \varphi \leftrightarrow \psi \implies (\varphi \in \cl(G(w)) \iff \psi \in \cl(G(w)))$
        \item[\textsf{(G)}\footnotemark] $\Kframe \models \varphi \implies \varphi \in \cl(G(w))$
        \item[\textsf{(M)}] $\varphi \land \psi \in \cl(G(w)) \implies \varphi, \psi \in \cl(G(w))$
        \item[\textsf{(C)}] $\varphi, \psi \in \cl(G(w)) \implies \varphi \land \psi \in \cl(G(w))$
        \item[\textsf{(N)}] $\top \in \cl(G(w))$
        \item[\textsf{(K)}] $\varphi, \varphi \to \psi \in \cl(G(w)) \implies \psi \in \cl(G(w))$
    \end{description}
    \caption{Properties of closure frames}
    \label{fig:property-closure-frame}
\end{figure}
\footnotetext{The letter G stands for the \emph{generalization rule}, usually called the \emph{necessitation rule}. We use \textsf{(G)} to avoid confusion with the property corresponding to the axiom scheme\textsf{N}.}

Figure \ref{fig:axiom-and-rules} lists the corresponding axiom schemata and inference rules.
The frame properties above are characterized by these principles.

\begin{proposition}
    Let $\Kframe$ be a closure frame. Then, the following statements hold:
    \begin{enumerate}[label=(\alph*)]
        \item $\Kframe$ satisfies \textsf{(E)} if and only if $\logicof(\Kframe)$ is closed under \textsf{RE}.
        \item $\Kframe$ satisfies \textsf{(G)} if and only if $\logicof(\Kframe)$ is closed under \textsf{Gen}.
        \item For each $\mathsf{X} \in \{ \mathsf{M}, \mathsf{C}, \mathsf{N}, \mathsf{K} \}$, $\Kframe$ satisfies \textsf{(X)} if and only if $\mathsf{X} \in \logicof(\Kframe)$.
    \end{enumerate}
    \label{prop:correspondence}
\end{proposition}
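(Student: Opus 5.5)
The plan is to prove each equivalence by unfolding the truth clause $w \models \Box\varphi \iff \varphi \in \cl(G(w))$. The key observation is that the set $\cl(G(w))$ does not depend on the valuation $V$. Only the truth values of the non-modal parts of formulas vary with $V$. I will use the standard reading of the figure: \textsf{RE} derives $\Box\varphi \leftrightarrow \Box\psi$ from $\varphi \leftrightarrow \psi$, \textsf{Gen} derives $\Box\varphi$ from $\varphi$, and \textsf{M}, \textsf{C}, \textsf{N}, \textsf{K} are the usual schemata $\Box(\varphi\land\psi)\to\Box\varphi\land\Box\psi$, $\Box\varphi\land\Box\psi\to\Box(\varphi\land\psi)$, $\Box\top$, and $\Box(\varphi\to\psi)\to(\Box\varphi\to\Box\psi)$. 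Closure of $\logicof(\Kframe)$ under a rule means that whenever the premise is valid on $\Kframe$, so is the conclusion.

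For (a) and (b), the conditions \textsf{(E)} and \textsf{(G)} have as hypotheses exactly ``$\Kframe \models \varphi\leftrightarrow\psi$'' and ``$\Kframe\models\varphi$'', which are the premises of the rules. Their conclusions, quantified over all $w$, say exactly that $\Box\varphi\leftrightarrow\Box\psi$, respectively $\Box\varphi$, holds at every world under every valuation. This uses valuation-independence of $\cl(G(w))$. Both directions are therefore immediate rewritings of the definitions.

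For (c), the direction from the frame condition to validity is a one-line check at an arbitrary world and valuation. For example, under \textsf{(K)}, if $w\models\Box(\varphi\to\psi)$ and $w\models\Box\varphi$, then $\varphi\to\psi,\varphi\in\cl(G(w))$, hence $\psi\in\cl(G(w))$, so $w\models\Box\psi$. \textsf{M}, \textsf{C} and \textsf{N} are analogous. For the converse, assume the schema is valid on $\Kframe$ and fix $w$, $\varphi$, $\psi$ witnessing the antecedent of the frame condition. Choose any valuation $V$, say $V(p)=\emptyset$. In $(\Kframe,V)$ the boxed antecedents hold at $w$ by the membership hypothesis. Validity of the schema instance for these particular $\varphi,\psi$ then gives the boxed consequent at $w$, which again translates into membership.

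The one point needing care, and where I expect the only real subtlety, is that the axioms are schemata. Validity must therefore be understood as validity of all instances, and the frame conditions quantify over arbitrary formulas $\varphi,\psi$ rather than propositional variables. One cannot argue with variables $p,q$ and then use uniform substitution, because closure frames need not be closed under substitution: $\cl$ treats formulas syntactically. With instance-wise validity, the argument above uses the given $\varphi,\psi$ directly and no substitution lemma is needed. I would also spell out that $\land$ and $\top$ are the paper's abbreviations, so that membership of $\varphi\land\psi$ refers to the same formula the axiom mentions.
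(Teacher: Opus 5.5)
Your proposal is correct and follows essentially the same route as the paper: unfold the clause $w \models \Box\varphi \iff \varphi \in \cl(G(w))$, use that $\cl(G(w))$ does not depend on the valuation, and for the converse directions evaluate the valid schema instance under an arbitrary valuation at the given world. Your remark that validity of a schema must be read instance-wise is correct, because $\logicof(\Kframe)$ need not be closed under uniform substitution. The paper leaves this point implicit, and it is a useful clarification.
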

\begin{proof}
    We prove only the cases for \textsf{(E)} and \textsf{(M)}. The other cases are proved similarly.

    Suppose first that $\Kframe$ satisfies \textsf{(E)}, and assume that $\Kframe \models \varphi \leftrightarrow \psi$.
    Then, for every $w \in \Kframe$, $\varphi \in \cl(G(w))$ if and only if $\psi \in \cl(G(w))$ by \textsf{(E)}.
    Hence, for every valuation $V$ and every $w \in \Kframe$,
    \[
        (\Kframe, V), w \models \Box \varphi \iff (\Kframe, V), w \models \Box \psi.
    \]
    Therefore, $\Kframe \models \Box \varphi \leftrightarrow \Box \psi$.

    Conversely, suppose that $\logicof(\Kframe)$ is closed under \textsf{RE}, and assume that $\Kframe \models \varphi \leftrightarrow \psi$.
    From the latter, we have $\varphi \leftrightarrow \psi \in \logicof(\Kframe)$, and hence $\Box \varphi \leftrightarrow \Box \psi \in \logicof(\Kframe)$ by \textsf{RE}.
    Therefore, for every $w \in \Kframe$, $\varphi \in \cl(G(w))$ if and only if $\psi\in\cl(G(w))$.

    Next, suppose that $\Kframe$ satisfies \textsf{(M)}.
    Let $V$ be any valuation and $w \in \Kframe$, and suppose that $(\Kframe, V), w \models \Box (\varphi \land \psi)$.
    Then, $\varphi \land \psi \in \cl(G(w))$ holds.
    By \textsf{(M)}, we have $\varphi, \psi \in \cl(G(w))$, and hence
    $(\Kframe, V), w \models \Box \varphi \land \Box \psi$.

    Conversely, suppose that $\mathsf{M} \in \logicof(\Kframe)$.
    Take any $w \in \Kframe$, and suppose that $\varphi \land \psi \in \cl(G(w))$.
    In this situation, we have $(\Kframe, V), w \models \Box (\varphi \land \psi)$ for any valuation $V$. Then, it follows that $(\Kframe, V), w \models \Box \varphi \land \Box \psi$ by \textsf{M}.
    Therefore, $\varphi, \psi \in \cl(G(w))$.
\end{proof}

\begin{figure}
    \noindent \underline{Axiom Schemata}
    \begin{description}
        \item[\textsf{M}] $\Box (\varphi \land \psi) \to \Box \varphi \land \Box \psi$
        \item[\textsf{C}] $\Box \varphi \land \Box \psi \to \Box (\varphi \land \psi)$
        \item[\textsf{N}] $\Box \top$
        \item[\textsf{K}] $\Box (\varphi \to \psi) \to (\Box \varphi \to \Box \psi)$ 
    \end{description}

    \noindent \underline{Inference Rules}
    \begin{description}
        \item[\textsf{RE}] From $\varphi \leftrightarrow \psi$, infer $\Box \varphi \leftrightarrow \Box \psi$.
        \item[\textsf{Gen}] From $\varphi$, infer $\Box \varphi$.
    \end{description}
    \caption{Modal axiom schemata and inference rules}
    \label{fig:axiom-and-rules}
\end{figure}

For $\Lambda \subseteq \{ \mathsf{E, G, M, C, N, K} \}$, $\mathbf{CF}_\Lambda$ denotes the class of frames satisfying all conditions \textsf{(X)} such that $\mathsf{X} \in \Lambda$. In particular, $\mathbf{CF} = \mathbf{CF}_\emptyset$ is the class of all closure frames. We abbreviate $\Gamma \models_{\mathbf{CF}_\Lambda} \varphi$ to $\Gamma \models_\Lambda \varphi$.

\section{Modal Logics}
\label{sec:logic}
Having introduced closure semantics, we now define the modal logics to be studied with respect to this semantics.
As our weakest starting point, we take classical propositional logic formulated in a modal language, without imposing any specifically modal axioms or inference rules.
We then obtain stronger modal logics by adding some of the axiom schemata \textsf{M}, \textsf{C}, \textsf{N}, and \textsf{K} and the inference rules \textsf{RE} and \textsf{Gen} introduced in the previous section.

The soundness of these logics follows directly from the corresponding semantic conditions established above.
The main task of this section is therefore completeness, for which we construct suitable countermodels for formulas that are not derivable.
We first treat logics without modal inference rules, where a one-point construction suffices, and then turn to the general case involving \textsf{RE} or \textsf{Gen}.

\begin{definition}
    A \emph{modal logic} $L$ is a subset of $\MF$ that contains all classical tautologies and is closed under \textit{modus ponens} and uniform substitution. We say that a formula $\varphi$ is \emph{provable} in a logic $L$ from $\Gamma$ if there are formulas $\gamma_1, \ldots, \gamma_n \in \Gamma$ such that $\gamma_1 \land \cdots \land \gamma_n \to \varphi \in L$\footnote{We allow $n = 0$, in which case the empty conjunction is understood as $\top$.}.
\end{definition}

\begin{definition}
    For $\Lambda \subseteq \{ \mathsf{E, G, M, C, N, K} \}$, we define $L_\Lambda$ as the smallest modal logic satisfying the following conditions:
    \begin{itemize}
        \item $L_\Lambda$ is closed under \textsf{RE} if $\mathsf{E} \in \Lambda$.
        \item $L_\Lambda$ is closed under \textsf{Gen} if $\mathsf{G} \in \Lambda$.
        \item For each $\mathsf{X} \in \{ \mathsf{M, C, N, K} \}$, $L_\Lambda$ contains an axiom scheme\textsf{X} if $\mathsf{X} \in \Lambda$.
    \end{itemize}
    We write $\Gamma \vdash_\Lambda \varphi$ to mean that $\varphi$ is provable in $L_\Lambda$ from $\Gamma$.
\end{definition}
In particular, we use the following notation: $\mathbf{ML}_0 = L_\emptyset, \mathbf{E} = L_{\mathsf{E}}, \mathbf{N} = L_\mathsf{G}, \mathbf{K} = L_\mathsf{GK}$.

We write $\mathbf{ML}_0$ for the weakest modal logic, namely, classical propositional logic formulated in the modal language with no additional modal axiom or inference rule.
The subscript $0$ indicates that no specifically modal principle is imposed.
The same minimal system has appeared under various names in the literature, including $\mathbf{S}$ in Makinson \cite{makinson1971} and Wansing \cite{wansing1989}, $\mathbf{L}_0$ in Makinson \cite{makinson1973}, $\mathbf{PC}$ in Segerberg \cite{segerberg1971}, and $\mathbf{M}$ in Omori and Skurt \cite{omori2024}.

Observe that soundness follows immediately from Proposition \ref{prop:correspondence}.

\begin{theorem}
    Let $\Lambda \subseteq \{ \mathsf{E, G, M, C, N, K} \}$. Then, the logic $L_\Lambda$ is sound with respect to the class $\mathbf{CF}_\Lambda$ of frames. That is, $\Gamma \vdash_\Lambda \varphi$ implies $\Gamma \models_\Lambda \varphi$.
    \label{thm:soundness-closure-frame}
\end{theorem}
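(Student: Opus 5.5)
The plan is to show that for each frame $\Kframe \in \mathbf{CF}_\Lambda$ the set $\logicof(\Kframe)$ is a modal logic containing $L_\Lambda$, and then pass from theoremhood to consequence. Since $L_\Lambda$ is the \emph{smallest} modal logic with the closure properties listed in its definition, it suffices to check that $\logicof(\Kframe)$ has all of them.

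First, $\logicof(\Kframe)$ is a modal logic in the sense of the definition.
\begin{itemize}
    \item Classical tautologies are valid, because the Boolean clauses of the satisfaction relation are the classical ones and $\Box$-subformulas behave as atoms.
    \item Modus ponens preserves validity, pointwise at every world and valuation.
    \item Uniform substitution needs the one argument with real content. Given a substitution $\sigma$ and a valuation $V$, define $V'(p) = \{ w \mid (\Kframe, V), w \models \sigma(p) \}$. By induction on $\varphi$, show that $(\Kframe, V'), w \models \varphi$ iff $(\Kframe, V), w \models \sigma(\varphi)$.
\end{itemize}
The delicate case of that induction is $\Box\varphi$. The left side means $\varphi \in \cl(G(w))$, while the right side means $\sigma(\varphi) \in \cl(G(w))$, and these need not coincide. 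This is the main obstacle.

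My first attempt at the $\Box$ case is to notice that validity in a frame quantifies over all valuations but not over formulas inside $\cl(G(w))$. So the substitution lemma genuinely fails for an arbitrary $\cl$. For instance, take $\cl(G(w)) = \{p\}$: then $\Box p$ is not valid, but nothing forces it to be. The real danger is a formula like $\Box p \to \Box p$-variants, i.e.\ formulas valid because $\cl(G(w))$ contains $p$ yet not $\sigma(p)$. If $\cl(G(w))=\{p\}$ for all $w$, then $\Box p$ is valid in $\Kframe$ but $\Box q$ is not, so $\logicof(\Kframe)$ is not closed under substitution.

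I would therefore reformulate the argument rather than fight this. I will prove the sharper statement that every theorem of $L_\Lambda$ is valid on every frame of $\mathbf{CF}_\Lambda$. To do so, I take the set $T_\Lambda = \bigcap_{\Kframe \in \mathbf{CF}_\Lambda} \logicof(\Kframe)$ and show that $T_\Lambda$ is closed under substitution. The intended argument is that the class $\mathbf{CF}_\Lambda$ is closed under the relabelling of generators. Given $\Kframe$ and $\sigma$, build the frame $\Kframe^\sigma$ with the same $W$ and $G$ and with $\cl^\sigma(X) = \{ \psi \mid \sigma(\psi) \in \cl(X) \}$. Then $\Box\varphi$ holds in $\Kframe^\sigma$ under $V'$ iff $\Box\sigma(\varphi)$ holds in $\Kframe$ under $V$, which makes the induction go through. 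It remains to check that $\Kframe^\sigma \in \mathbf{CF}_\Lambda$ whenever $\Kframe$ is. For \textsf{(M)}, \textsf{(C)}, \textsf{(N)} and \textsf{(K)} this is immediate, since $\sigma$ commutes with the connectives. For \textsf{(E)} and \textsf{(G)} it follows from the induction hypothesis linking validity in the two frames, applied to formulas of lower modal depth. If this circularity turns out to be a problem, I would run the construction along the modal-depth induction.

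Next come the rules and axioms. By Proposition~\ref{prop:correspondence}(c), each axiom $\mathsf{X} \in \Lambda \cap \{\mathsf{M,C,N,K}\}$ is valid on every frame of $\mathbf{CF}_\Lambda$. By Proposition~\ref{prop:correspondence}(a),(b), each frame's logic is closed under \textsf{RE} (resp.\ \textsf{Gen}) when $\mathsf{E} \in \Lambda$ (resp.\ $\mathsf{G} \in \Lambda$), so $T_\Lambda$ is closed under these rules too. Hence $L_\Lambda \subseteq T_\Lambda$.

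Finally, suppose $\Gamma \vdash_\Lambda \varphi$. Then $\gamma_1 \land \cdots \land \gamma_n \to \varphi \in L_\Lambda$, so this implication is valid on every frame in $\mathbf{CF}_\Lambda$. Consequently, at any world satisfying $\Gamma$ the formula $\varphi$ holds, which gives $\Gamma \models_\Lambda \varphi$.
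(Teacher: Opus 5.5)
Your proof is correct, but it follows a different route from the paper. The paper only says soundness ``follows immediately'' from Proposition~\ref{prop:correspondence}, i.e.\ the usual per-frame argument: each $\logicof(\Kframe)$ with $\Kframe\in\mathbf{CF}_\Lambda$ contains the axioms and is closed under the rules, hence contains $L_\Lambda$. You correctly spot that this cannot be applied verbatim. $\logicof(\Kframe)$ need not be closed under uniform substitution (your frame with $\cl(G(w))=\{p\}$ shows it), so the minimality of $L_\Lambda$ does not apply to it. Accordingly, delete your opening claim that $\logicof(\Kframe)$ is a modal logic.

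Your fix, working with $T_\Lambda$ and showing that $\mathbf{CF}_\Lambda$ is closed under $\cl\mapsto\cl^\sigma$, works. The circularity you fear for \textsf{(E)} and \textsf{(G)} does not arise. The lemma $(\Kframe^\sigma,V'),w\models\varphi\iff(\Kframe,V),w\models\sigma(\varphi)$ is a plain induction that holds on every frame, with no condition on $\cl$. So if $\Kframe^\sigma\models\varphi$, applying the lemma to the $V'$ induced by an arbitrary $V$ gives $\Kframe\models\sigma(\varphi)$. Hence $\sigma(\varphi)\in\cl(G(w))$ by \textsf{(G)}, i.e.\ $\varphi\in\cl^\sigma(G(w))$. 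The case of \textsf{(E)} is identical with $\varphi\leftrightarrow\psi$, and no modal-depth induction is needed.

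A lighter way to make the paper's one-liner literally correct is syntactic. Let $L'_\Lambda$ be the closure of all tautology instances and all instances of the schemata in $\Lambda$ under modus ponens and, as applicable, \textsf{RE} and \textsf{Gen}. This set is already closed under substitution, by induction on derivations, since $\sigma$ maps instances to instances and commutes with the rules. So $L'_\Lambda=L_\Lambda$, and each single $\logicof(\Kframe)$ is closed under exactly these generating operations by Proposition~\ref{prop:correspondence}. That route avoids building new frames. Your semantic version costs an extra construction but yields a reusable structural fact: each class $\mathbf{CF}_\Lambda$ is closed under pulling $\cl$ back along substitutions. This explains why class logics are substitution-closed even though single-frame logics are not.
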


As a simple application of closure semantics, we can separate the logics $\mathbf{E}$ and $\mathbf{N}$ by suitable closure models. We write $\mathrm{id}$ for the identity closure, defined by $\mathrm{id}(\Gamma) = \Gamma$ for every $\Gamma \subseteq \MF$.

\begin{proposition}
The modal logics $\mathbf{E}$ and $\mathbf{N}$ are incomparable; that is, neither $\mathbf{E} \subseteq \mathbf{N}$ nor $\mathbf{N} \subseteq \mathbf{E}$ holds.
\label{prop:E-N-incomparable}
\end{proposition}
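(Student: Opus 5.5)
To establish incomparability we exhibit two formulas and, for each, a single closure model via the soundness theorem (Theorem~\ref{thm:soundness-closure-frame}). The first formula is $\Box\top$, which lies in $\mathbf{N}$ by \textsf{Gen}. The second is $\Box p \to \Box \neg\neg p$, which lies in $\mathbf{E}$ by \textsf{RE}, since $p \leftrightarrow \neg\neg p$ is a tautology. It then suffices to show $\Box\top \notin \mathbf{E}$ and $\Box p \to \Box\neg\neg p \notin \mathbf{N}$.

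For $\Box\top\notin\mathbf{E}$, I take the one-world frame $W=\{w\}$ with $G(w)=\emptyset$ and $\cl=\mathrm{id}$. Then $\cl(G(w))=\emptyset$, so condition \textsf{(E)} holds vacuously: both sides of the biconditional are false for every $\varphi,\psi$. The frame is therefore in $\mathbf{CF}_{\mathsf{E}}$. Since $\top\notin\cl(G(w))$, we get $w\notmodels\Box\top$, and soundness gives $\Box\top\notin\mathbf{E}$.

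For $\Box p\to\Box\neg\neg p\notin\mathbf{N}$, I again take a one-world frame with $\cl=\mathrm{id}$, but now $G(w)=\logicof(\Kframe)\cup\{p\}$. The definition is circular, so it has to be resolved. The frame's logic depends only on how $\Box$ behaves, and with one world and fixed $G(w)$, a formula is valid on the frame iff it is true under every valuation of the single world. The cleanest fix is to define $G(w)=T\cup\{p\}$, where $T$ is the set of formulas true at $w$ under every valuation in this very frame. To break the circularity, I would first fix the intended set $H$ and then check that it is a fixed point. The natural candidate is $H=\mathbf{N}\cup\{p\}$, i.e.\ the theorems of $\mathbf{N}$ plus $p$. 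For this $H$ one must verify two things. First, every formula valid in the resulting frame belongs to $\mathbf{N}$; this gives \textsf{(G)}, since \textsf{(G)} only requires valid formulas to be in $H$. Second, $\neg\neg p\notin H$, which holds if $\neg\neg p\notin\mathbf{N}$, and that is immediate because $\neg\neg p$ is not even a tautology.

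The first point is the main obstacle, and I would attack it by a closer look at the resulting model. For every $\varphi\in\mathbf{N}$ we have $\Box\varphi$ true, and additionally $\Box p$ is true. A valid formula in this frame is then a formula valid in a model in which $\Box$ is interpreted by membership in $\mathbf{N}\cup\{p\}$. The extra $p$ could in principle add validities beyond $\mathbf{N}$, such as $\Box p$ itself.

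Since exact equality with $\mathbf{N}$ may fail, I would weaken the requirement rather than insist on it. I only need \textsf{(G)}, so I can simply set $H=\logicof_0\cup\{p\}$, where $\logicof_0$ is the set of formulas valid in the frame. Then \textsf{(G)} holds by construction, and the only remaining task is to show $\neg\neg p$ is not valid there, which is clear since valuing $p$ false at $w$ falsifies it. The circularity is resolved by a least fixed point, iterating from $H_0=\{p\}$ and adding formulas valid at each stage. Validity is monotone in $H$ for formulas of fixed truth, so the iteration stabilises; alternatively a transfinite union does the job. At each stage $\neg\neg p$ is never valid, because its truth depends only on $V(p)$.

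Thus $w\models\Box p$ and $w\notmodels\Box\neg\neg p$ on a frame satisfying \textsf{(G)}, and soundness yields $\Box p\to\Box\neg\neg p\notin\mathbf{N}$, completing the proof.
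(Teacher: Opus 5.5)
Your argument for $\Box\top\notin\mathbf{E}$ ($G(w)=\emptyset$, $\cl=\mathrm{id}$) is exactly the paper's. For $\mathbf{E}\not\subseteq\mathbf{N}$ you take a different route in two respects. First, you falsify the other half of $\Box p\leftrightarrow\Box\neg\neg p$. Second, you build the generator by iteration. The paper instead takes $G(w)=\MF\setminus\{p\}$ with $\cl=\mathrm{id}$, so $\Box p$ fails and $\Box\neg\neg p$ holds; \textsf{(G)} is then immediate because the only omitted formula, $p$, is not valid.

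Your iteration does work, but the reason you give is false: frame validity is \emph{not} monotone in $H$. With $H_0=\{p\}$ the formula $\neg\Box\top$ is valid. At the next stage $\top$ is added, so $\Box\top$ becomes true and $\neg\Box\top$ is no longer valid, although it remains in $H$. What actually makes the construction succeed is that the chain $H_0\subseteq H_1\subseteq\cdots$ is increasing by definition. It already stabilises at $H_\omega=\bigcup_n H_n$. Indeed, a formula $\varphi$ valid at $H_\omega$ has only finitely many subformulas $\Box\chi$, and for large $n$ each such $\chi$ lies in $H_n$ iff it lies in $H_\omega$. So $\varphi$ was already valid at $H_n$, and hence $\varphi\in H_{n+1}$. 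Thus every valid formula is in $H_\omega$, which is \textsf{(G)}, and $\neg\neg p$ never enters. For the same reason, ``least fixed point'' is a misnomer. Also, $H_\omega$ is in general a proper superset of $\logicof(\Kframe)\cup\{p\}$ (it contains $\neg\Box\top$). This is harmless, since \textsf{(G)} only requires an inclusion.

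All of this machinery is unnecessary. You observed yourself that $\neg\neg p$ is falsified by $V(p)=\emptyset$ whatever the generator is. Hence $G(w)=\MF\setminus\{\neg\neg p\}$ with $\cl=\mathrm{id}$ satisfies \textsf{(G)} outright and falsifies $\Box p\to\Box\neg\neg p$; this is the mirror image of the paper's one-line construction.

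Even your first candidate $G(w)=\logicof(\Kframe)\cup\{p\}$ is not genuinely circular. In a one-world frame with $\cl=\mathrm{id}$, the validity of $\varphi$ depends only on whether $\chi\in G(w)$ for the finitely many $\chi$ with $\Box\chi$ a subformula of $\varphi$. All such $\chi$ are strictly less complex than $\varphi$, so the set is definable by recursion on complexity. This is the same well-foundedness point the paper makes in Example~\ref{ex:GK-not-KRep}.
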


\begin{proof}
By Theorem \ref{thm:soundness-closure-frame}, it suffices to construct a closure model whose underlying frame satisfies \textsf{(G)} in which some theorem of $\mathbf{E}$ is false, and a closure model whose underlying frame satisfies \textsf{(E)} in which some theorem of $\mathbf{N}$ is false.

For the first direction, let $\Kmodel_1 = (\{w\}, G, \mathrm{id}, V)$ be a closure model such that $G(w) = \MF \setminus \{p\}$.
Then, its underlying frame satisfies \textsf{(G)}; indeed, the only formula not belonging to $G(w)$ is $p$, which is not valid on the frame.
Since $p\leftrightarrow\neg\neg p$ is a classical tautology,
$\Box p \leftrightarrow \Box \neg \neg p \in \mathbf{E}$.
However, $\Kmodel_1, w \notmodels \Box p$ and $\Kmodel_1, w \models \Box \neg \neg p$.
Therefore, $\mathbf{E} \nsubseteq \mathbf{N}$.

For the converse direction, let $\Kmodel_2 = (\{w\}, G, \mathrm{id}, V)$ be a closure model such that $G(w) = \emptyset$.
Its underlying frame satisfies \textsf{(E)} trivially.
Since $\top \in \mathbf{N}$, we have $\Box \top \in \mathbf{N}$ by \textsf{Gen}.
However, $\Kmodel_2, w \notmodels \Box \top$.
Therefore, $\mathbf{N}\nsubseteq\mathbf{E}$.
\end{proof}

\subsection{Completeness without Inference Rules}
\label{subsec:completeness-onepoint}
We first consider the case where neither \textsf{E} nor \textsf{G} belongs to $\Lambda$, that is, where the logic contains no inference rule governing the modal operator.
In this case, completeness can be proved by constructing a finite one-point canonical model.

More precisely, starting from a finite set of formulas, we construct a finite generator and define the corresponding closure $\cl_\Lambda$ effectively.
This yields a finite countermodel for any formula that is not derivable.
In the latter part of this subsection, we use the finitary and effective nature of this construction to establish the finite model property and decidability.

\begin{definition}
    Let $\varphi \in \MF$. We define $\sfset(\varphi)$ as the set of all subformulas of $\varphi$. Moreover, we define some sets of formulas as follows:
    \begin{align*}
        \sfset^\Box (\varphi) &= \sfset(\varphi) \cup \{ \Box \psi \mid \psi \in \sfset(\varphi) \} \cup \{ \top, \Box \top \} \\
        \sfset^* (\varphi) &= \sfset^\Box (\varphi) \cup \{ \sim \psi \mid \psi \in \sfset^\Box (\varphi) \}.
    \end{align*}
    Here, $\sim \psi$ denotes $\chi$ if $\psi$ is of the form $\neg \chi$, and denotes $\neg \psi$ otherwise.

    We also define $\sfset(\Gamma)$ for a set $\Gamma$ of modal formulas as the union of $\sfset(\gamma)$ for every $\gamma \in \Gamma$. The sets $\sfset^\Box(\Gamma)$ and $\sfset^*(\Gamma)$ are defined likewise.
\end{definition}

\begin{definition}
    Let $\Gamma, \Delta$ be sets of modal formulas.
    \begin{itemize}
        \item $\Gamma$ is \emph{$L_\Lambda$-consistent} if $\Gamma \notvdash_\Lambda \bot$.
        \item $\Gamma$ is \emph{maximally $L_\Lambda$-consistent} if $\Gamma$ is $L_\Lambda$-consistent and $\psi \in \Gamma$ or $\sim \psi \in \Gamma$ for all $\psi \in \MF$.
        \item $\Gamma$ is \emph{$\sfset^*(\Delta)$-maximally $L_\Lambda$-consistent} if $\Gamma$ is $L_\Lambda$-consistent and $\psi \in \Gamma$ or $\sim \psi \in \Gamma$ for all $\psi \in \sfset^*(\Delta)$.
    \end{itemize}
\end{definition}

We use the following standard form of Lindenbaum's lemma: if $X$ is $L_\Lambda$-consistent, it can be extended to a ($\sfset^*(\Gamma)$-)maximally $L_\Lambda$-consistent set.
The proof is routine and is omitted.

We define a closure $\cl_\Lambda$. The idea is to build the semantic conditions corresponding to the modal axioms in $\Lambda$ directly into the generation process.
\begin{definition}
    We define $\cl_\Lambda$. First, for a given set $\Gamma$ of formulas, we define a sequence $(\Gamma^\Lambda_n)_{n \in \omega}$ of sets as follows:
    \begin{enumerate}[label=\arabic*.]
        \item $\Gamma^\Lambda_0 = \Gamma \cup \{ \top \}$ if $\mathsf{N} \in \Lambda$, and $\Gamma^\Lambda_0 = \Gamma$ otherwise.
        \item $\Gamma^\Lambda_{n+1}$ is the union of $\Gamma^\Lambda_n$ and the following sets:
        \begin{enumerate}[label=(\alph*)]
            \item $\{ \psi, \chi \mid \psi \land \chi \in \Gamma^\Lambda_n \}$ if $\mathsf{M} \in \Lambda$ \label{closure-construction-case-m}
            \item $\{ \psi \land \chi \mid \psi, \chi \in \Gamma^\Lambda_n \}$ if $\mathsf{C} \in \Lambda$ \label{closure-construction-case-c}
            \item $\{ \chi \mid \psi, \psi \to \chi \in \Gamma^\Lambda_n \}$ if $\mathsf{K} \in \Lambda$. \label{closure-construction-case-k}
        \end{enumerate}
    \end{enumerate}
    Then, we define $\cl_\Lambda$ as $\cl_\Lambda(\Gamma) = \bigcup_{n \in \omega} \Gamma^\Lambda_n$.
    \label{def:closure-onepoint}
\end{definition}

We now use this closure to construct the countermodel. Since no modal inference rule is present, we only need a single world. We take a maximally consistent set over the relevant finite fragment and use the formulas occurring under boxes in this set as the generator. The closure $\cl_\Lambda$ supplies exactly the additional modal information required by the axioms in $\Lambda$.
\begin{definition}
    Given an $L_\Lambda$-consistent set $\Gamma$ of formulas, we define $\Kmodel^\Gamma_\Lambda = (W, G, \cl, V)$ as follows:
    \begin{itemize}
        \item $W$ is a singleton $\{ \Delta_\Gamma \}$, where $\Delta_\Gamma$ is a $\sfset^*(\Gamma)$-maximally $L_\Lambda$-consistent set satisfying $\Gamma \subseteq \Delta_\Gamma \subseteq \sfset^*(\Gamma)$.
        \item $G(\Delta_\Gamma) = \{ \varphi \in \sfset^*(\Gamma) \mid \Box \varphi \in \Delta_\Gamma \}$.
        \item $\cl = \cl_\Lambda$ defined in Definition \ref{def:closure-onepoint}.
        \item $V(p) = \{ \Delta_\Gamma \}$ if $p \in \Delta_\Gamma$, and $V(p) = \emptyset$ otherwise.
    \end{itemize}
    \label{def:countermodel-onepoint}
\end{definition}

We next prove the truth lemma: a formula is true in the world $\Delta_\Gamma$ if it is in $\Delta_\Gamma$.
To prove it, the nontrivial point is to relate membership in the generated closure to membership of the corresponding boxed formula in the maximally consistent set. The following lemma provides exactly this direction.
\begin{lemma}
    For any $\Box \varphi \in \sfset^*(\Gamma)$, $\varphi \in \cl_\Lambda(G(\Delta_\Gamma))$ implies $\Box \varphi \in \Delta_\Gamma$.
    \label{lem:reflection-onepoint}
\end{lemma}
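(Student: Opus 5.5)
The plan is to prove a stronger, purely proof-theoretic invariant by induction along the construction of $\cl_\Lambda$ in Definition \ref{def:closure-onepoint}, and only at the very end use the maximality of $\Delta_\Gamma$ over $\sfset^*(\Gamma)$. The reason for this detour is that the intermediate sets $G(\Delta_\Gamma)^\Lambda_n$ may contain formulas $\psi$ with $\Box\psi \notin \sfset^*(\Gamma)$. For example, under \textsf{(C)} the conjunction $\psi \land \chi$ of two members can fall outside the fragment. So membership of $\Box\psi$ in $\Delta_\Gamma$ cannot serve as the inductive hypothesis. Instead I will show:
\[
    \text{for every } n \in \omega \text{ and every } \psi \in G(\Delta_\Gamma)^\Lambda_n, \quad \Delta_\Gamma \vdash_\Lambda \Box \psi .
\]

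\emph{Base case.} If $\psi \in G(\Delta_\Gamma)$, then $\Box\psi \in \Delta_\Gamma$ by the definition of the generator, so $\Delta_\Gamma \vdash_\Lambda \Box\psi$ trivially. If $\mathsf{N} \in \Lambda$ and $\psi = \top$, then $\Box\top \in L_\Lambda$ is an axiom, so it is provable from any set (the case $n=0$ of provability).

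\emph{Inductive step.} Each new element of $G(\Delta_\Gamma)^\Lambda_{n+1}$ arises by one of the clauses \ref{closure-construction-case-m}--\ref{closure-construction-case-k}, and each clause is present only when the corresponding axiom is in $L_\Lambda$.
\begin{itemize}
    \item For \textsf{M}: from $\Delta_\Gamma \vdash_\Lambda \Box(\psi\land\chi)$ and the instance $\Box(\psi\land\chi) \to \Box\psi \land \Box\chi$, we get $\Delta_\Gamma \vdash_\Lambda \Box\psi$ and $\Delta_\Gamma \vdash_\Lambda \Box\chi$.
    \item For \textsf{C}: combining $\Delta_\Gamma \vdash_\Lambda \Box\psi$ and $\Delta_\Gamma \vdash_\Lambda \Box\chi$ with \textsf{C} yields $\Delta_\Gamma \vdash_\Lambda \Box(\psi\land\chi)$.
    \item For \textsf{K}: \textsf{K} yields $\Box\chi$ from $\Box\psi$ and $\Box(\psi\to\chi)$ in the same way.
\end{itemize}
Each of these uses only the fact that provability from $\Delta_\Gamma$ is closed under modus ponens with theorems of $L_\Lambda$. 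This holds because one can merge the finite conjunctions of premises and reason classically inside $L_\Lambda$.

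Since $\cl_\Lambda(G(\Delta_\Gamma))$ is the union of the stages, the invariant gives $\Delta_\Gamma \vdash_\Lambda \Box\varphi$ whenever $\varphi \in \cl_\Lambda(G(\Delta_\Gamma))$. Now assume $\Box\varphi \in \sfset^*(\Gamma)$. By $\sfset^*(\Gamma)$-maximality, either $\Box\varphi \in \Delta_\Gamma$ or ${\sim}\Box\varphi = \neg\Box\varphi \in \Delta_\Gamma$. In the latter case $\Delta_\Gamma$ proves both $\Box\varphi$ and $\neg\Box\varphi$, hence $\bot$, contradicting consistency. So $\Box\varphi \in \Delta_\Gamma$.

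The only real obstacle is recognising that the naive invariant $\Box\psi \in \Delta_\Gamma$ fails outside the finite fragment, which is why I pass through derivability. The remaining check is routine: "$\wedge$" is an abbreviation, so the axiom instances must be matched with the defined conjunction exactly as it appears in the closure clauses.
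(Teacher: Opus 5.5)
Your proposal is correct and follows the paper's proof essentially step for step. The paper also proves, by induction on $n$, that $\Delta_\Gamma \vdash_\Lambda \Box\psi$ for every $\psi \in (G(\Delta_\Gamma))^\Lambda_n$, using axiom \textsf{N} for the added $\top$ and \textsf{M}, \textsf{C}, \textsf{K} for the respective clauses. It then concludes $\Box\varphi \in \Delta_\Gamma$ from $\sfset^*(\Gamma)$-maximality and consistency, exactly as you do.
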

\begin{proof}
    We show the following claim by induction on $n$:
    \begin{center}
        \textbf{Claim.} for any $\varphi$, $\varphi \in (G(\Delta_\Gamma))^\Lambda_n$ implies $\Delta_\Gamma \vdash_\Lambda \Box \varphi$.
    \end{center}
    \begin{proof}[Proof of the Claim.]
        In the base case $n = 0$, it is straightforward from the definition of $G(\Delta_\Gamma)$, except when $\varphi = \top$ and $\top \notin G(\Delta_\Gamma)$. In this case, $\top$ is added to $(G(\Delta_\Gamma))^\Lambda_0$ because $\mathsf{N} \in \Lambda$. Hence, $\Delta_\Gamma \vdash_\Lambda \Box \top$ follows from axiom \textsf{N}.
    
        For the inductive step, suppose that the claim holds for $n \leq k$ and $\varphi \in (G(\Delta_\Gamma))^\Lambda_{k+1}$.
        If $\varphi \in (G(\Delta_\Gamma))^\Lambda_k$, the claim follows immediately from the induction hypothesis.
    
        Suppose that $\varphi$ is added by clause \ref{closure-construction-case-m}, so that $\varphi \land \psi \in (G(\Delta_\Gamma))^\Lambda_k$ for some $\psi$.
        By the induction hypothesis, $\Delta_\Gamma \vdash_\Lambda \Box(\varphi \land \psi)$.
        Hence, $\Delta_\Gamma \vdash_\Lambda \Box\varphi$ by the axiom scheme \textsf{M}.
        The cases for clauses \ref{closure-construction-case-c} and \ref{closure-construction-case-k} are proved similarly, using the axiom schemata \textsf{C} and \textsf{K}, respectively.
    \end{proof}
    Finally, suppose that $\Box \varphi \in \sfset^*(\Gamma)$ and $\varphi \in \cl_\Lambda(G(\Delta_\Gamma))$. Then, the claim yields that $\Delta_\Gamma \vdash_\Lambda \Box \varphi$. Since $\Delta_\Gamma$ is $\sfset^*(\Gamma)$-maximally $L_\Lambda$-consistent, we have $\Box \varphi \in \Delta_\Gamma$.
\end{proof}

\begin{lemma}
    For any $\varphi \in \sfset^*(\Gamma)$, $\Kmodel_\Lambda^\Gamma, \Delta_\Gamma \models \varphi$ if and only if $\varphi \in \Delta_\Gamma$.
    \label{lem:truth-lemma-onepoint}
\end{lemma}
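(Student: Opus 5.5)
The proof goes by induction on the construction of $\varphi$, restricted to formulas in $\sfset^*(\Gamma)$.

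\emph{Closure under subformulas.} First I check that $\sfset^*(\Gamma)$ is closed under taking subformulas, so that the induction hypothesis always applies to the immediate subformulas.
\begin{itemize}
\item Subformulas of elements of $\sfset(\Gamma)$ stay in $\sfset(\Gamma)$.
\item $\Box\psi$ with $\psi\in\sfset(\Gamma)$ has proper subformulas in $\sfset(\Gamma)$, and $\Box\top$ has only $\top$.
\item A formula $\sim\psi$ is either $\neg\psi$, whose proper subformulas are $\psi$ and its subformulas (all in $\sfset^\Box(\Gamma)$ by the previous cases), or a $\chi$ with $\neg\chi\in\sfset^\Box(\Gamma)$, hence a subformula of an element of $\sfset^\Box(\Gamma)$.
\end{itemize}

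\emph{Deductive closure.} I also record an auxiliary fact about the $\sfset^*(\Gamma)$-maximally consistent set $\Delta_\Gamma$: if $\psi\in\sfset^*(\Gamma)$ and $\Delta_\Gamma\vdash_\Lambda\psi$, then $\psi\in\Delta_\Gamma$. Otherwise $\sim\psi\in\Delta_\Gamma$, and since $\sim\psi$ is classically equivalent to $\neg\psi$, $\Delta_\Gamma$ would be inconsistent. In particular, for $\psi\in\sfset^*(\Gamma)$, exactly one of $\psi$ and $\sim\psi$ lies in $\Delta_\Gamma$. The same holds with $\neg\psi$ in place of $\sim\psi$ whenever $\neg\psi\in\sfset^*(\Gamma)$.

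\emph{Induction.}
\begin{itemize}
\item Case $\top$: $\top\in\Delta_\Gamma$ by deductive closure.
\item Case $p$: immediate from the definition of $V$.
\item Case $\neg\psi$: $\psi\in\sfset^*(\Gamma)$, and by the remark above $\neg\psi\in\Delta_\Gamma$ iff $\psi\notin\Delta_\Gamma$. The induction hypothesis then gives iff $\Delta_\Gamma\notmodels\psi$. This holds both when $\neg\psi=\sim\psi$ and when $\psi$ is itself a negation, since both $\psi$ and $\neg\psi$ are in $\sfset^*(\Gamma)$ and consistency plus deductive closure handle either situation.
\item Case $\psi\to\chi$: both components are in $\sfset^*(\Gamma)$. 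By deductive closure and consistency, $\psi\to\chi\in\Delta_\Gamma$ iff $\psi\notin\Delta_\Gamma$ or $\chi\in\Delta_\Gamma$. Then apply the induction hypothesis.
\end{itemize}

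\emph{The modal case $\Box\psi\in\sfset^*(\Gamma)$.} Note that $\psi\in\sfset^*(\Gamma)$ by subformula closure.
\begin{itemize}
\item If $\Box\psi\in\Delta_\Gamma$, then $\psi\in G(\Delta_\Gamma)$ by the definition of the generator. Since $G(\Delta_\Gamma)\subseteq (G(\Delta_\Gamma))^\Lambda_0\subseteq\cl_\Lambda(G(\Delta_\Gamma))$, we get $\Delta_\Gamma\models\Box\psi$.
\item Conversely, if $\psi\in\cl_\Lambda(G(\Delta_\Gamma))$, Lemma \ref{lem:reflection-onepoint} gives $\Box\psi\in\Delta_\Gamma$ directly.
\end{itemize}
The induction hypothesis is not needed here. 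That is the point of the construction: the generator and closure encode modal truth directly, and Lemma \ref{lem:reflection-onepoint} has already done the real work.

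The main potential obstacle is bookkeeping around the operator $\sim$, namely ensuring that the negation case and the deductive-closure fact are stated for formulas genuinely in $\sfset^*(\Gamma)$. The subformula-closure check handles this.
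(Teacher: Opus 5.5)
Your proof is correct and follows the paper's argument: induction on complexity, where the modal case uses the definition of $G(\Delta_\Gamma)$ for one direction and Lemma \ref{lem:reflection-onepoint} for the other. Your additional bookkeeping fills in details the paper dismisses as straightforward. This covers the subformula closure of $\sfset^*(\Gamma)$, the deductive closure of $\Delta_\Gamma$ within $\sfset^*(\Gamma)$, and the observation that $\psi \in \sfset^*(\Gamma)$ is needed to conclude $\psi \in G(\Delta_\Gamma)$.
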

\begin{proof}
    By induction on the complexity of $\varphi$.
    Since the base case and the inductive cases for the Boolean connectives are straightforward, we only show the case $\varphi = \Box \psi$.

    If $\Delta_\Gamma \models \Box \psi$, then $\psi \in \cl_\Lambda(G(\Delta_\Gamma))$ by the satisfaction clause for $\Box$. By Lemma \ref{lem:reflection-onepoint}, $\Box \psi \in \Delta_\Gamma$. Conversely, if $\Box \psi \in \Delta_\Gamma$, then $\psi \in G(\Delta_\Gamma) \subseteq \cl_\Lambda(G(\Delta_\Gamma))$. Therefore, we have $\Delta_\Gamma \models \Box \psi$.
\end{proof}

\begin{lemma}
    The model $\Kmodel_\Lambda^\Gamma$ satisfies the property \textsf{(X)} if $\mathsf{X} \in \Lambda$.
    \label{lem:corresponding-lemma-onepoint}
\end{lemma}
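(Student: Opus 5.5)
Here $\Lambda\subseteq\{\mathsf{M,C,N,K}\}$, since this subsection assumes $\mathsf{E},\mathsf{G}\notin\Lambda$. The model has the single world $\Delta_\Gamma$, so each property \textsf{(X)} is a condition on the one set $S = \cl_\Lambda(G(\Delta_\Gamma)) = \bigcup_n (G(\Delta_\Gamma))^\Lambda_n$. The plan is to read each condition off the stage-wise construction of Definition \ref{def:closure-onepoint}. The one fact we need is that the stages form an increasing chain $(G(\Delta_\Gamma))^\Lambda_0 \subseteq (G(\Delta_\Gamma))^\Lambda_1 \subseteq \cdots$, which holds because each $\Gamma^\Lambda_{n+1}$ is defined as a union containing $\Gamma^\Lambda_n$. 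Consequently, any finitely many members of $S$ all lie in a single stage $(G(\Delta_\Gamma))^\Lambda_n$.

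We then check the four cases.

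\textsf{(N)}: if $\mathsf{N}\in\Lambda$, then $\top \in (G(\Delta_\Gamma))^\Lambda_0 \subseteq S$ by clause 1.

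\textsf{(M)}: if $\varphi\land\psi\in S$, choose $n$ with $\varphi\land\psi \in (G(\Delta_\Gamma))^\Lambda_n$. Clause \ref{closure-construction-case-m} puts $\varphi,\psi$ into stage $n+1$, hence into $S$.

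\textsf{(C)}: if $\varphi,\psi\in S$, choose a common stage $n$ containing both, using monotonicity. Clause \ref{closure-construction-case-c} puts $\varphi\land\psi$ into stage $n+1$.

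\textsf{(K)}: if $\varphi$ and $\varphi\to\psi$ lie in $S$, take a common stage $n$. Clause \ref{closure-construction-case-k} puts $\psi$ into stage $n+1$.

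So each property holds outright, with no appeal to consistency or to Lemma \ref{lem:reflection-onepoint}.

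Two small points need care. First, the conjunction $\varphi\land\psi$ is an abbreviation; in clauses \ref{closure-construction-case-m} and \ref{closure-construction-case-c} and in \textsf{(M)}, \textsf{(C)} it must denote the same defined formula, which it does. Second, the properties in Figure \ref{fig:property-closure-frame} are stated for frames, while the lemma speaks of the model; since none of \textsf{(M)}, \textsf{(C)}, \textsf{(N)}, \textsf{(K)} mentions validity, they depend only on $G$ and $\cl$, so this distinction is harmless. The only real obstacle is therefore the common-stage argument for the binary closure conditions \textsf{(C)} and \textsf{(K)}, and monotonicity of the chain resolves it.
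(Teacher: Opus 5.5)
Your proposal is correct and follows the same route as the paper, which reads each property directly off the matching clause of Definition \ref{def:closure-onepoint}. Your common-stage argument, using that the stages form an increasing chain, spells out the detail the paper leaves implicit when it says the cases for \textsf{C} and \textsf{K} are ``proved similarly''.
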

\begin{proof}
    If $\mathsf{N} \in \Lambda$, then $\top \in \cl_\Lambda(G(\Delta_\Gamma))$ is straightforward from the construction of $\cl_\Lambda$. 
    
    Suppose that $\mathsf{M} \in \Lambda$ and $\varphi \land \psi \in \cl_\Lambda(G(\Delta_\Gamma))$.
    Then, by clause \ref{closure-construction-case-m} of Definition \ref{def:closure-onepoint}, we have $\varphi, \psi \in \cl_\Lambda(G(\Delta_\Gamma))$. We can prove the cases for \textsf{C} and \textsf{K} similarly.
\end{proof}

\begin{theorem}
    Let $\Lambda \subseteq \{ \mathsf{M, C, N, K} \}$. Then, the logic $L_\Lambda$ is strongly complete with respect to the class $\mathbf{CF}_\Lambda$ of frames. That is, $\Gamma \models_\Lambda \varphi$ implies $\Gamma \vdash_\Lambda \varphi$.
    \label{thm:completeness-onepoint}
\end{theorem}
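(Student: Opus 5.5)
We prove the contrapositive by the one-point construction already set up. Suppose $\Gamma \notvdash_\Lambda \varphi$. Then $\Gamma' := \Gamma \cup \{\neg\varphi\}$ is $L_\Lambda$-consistent. If it were not, there would be $\gamma_1,\dots,\gamma_n \in \Gamma$ with $\gamma_1\land\cdots\land\gamma_n\land\neg\varphi \to \bot \in L_\Lambda$, and propositional reasoning would give $\gamma_1\land\cdots\land\gamma_n \to \varphi \in L_\Lambda$, contradicting $\Gamma \notvdash_\Lambda \varphi$. We then take the model $\Kmodel^{\Gamma'}_\Lambda$ of Definition \ref{def:countermodel-onepoint}.

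First we check that this model is well defined, i.e.\ that a set $\Delta_{\Gamma'}$ with $\Gamma' \subseteq \Delta_{\Gamma'} \subseteq \sfset^*(\Gamma')$ exists which is $\sfset^*(\Gamma')$-maximally $L_\Lambda$-consistent.
\begin{itemize}
\item By Lindenbaum's lemma, extend $\Gamma'$ to a maximally $L_\Lambda$-consistent set $\Theta$, and put $\Delta_{\Gamma'} = \Theta \cap \sfset^*(\Gamma')$.
\item Every formula is a subformula of itself, so $\Gamma' \subseteq \sfset(\Gamma') \subseteq \sfset^*(\Gamma')$, and hence $\Gamma' \subseteq \Delta_{\Gamma'}$.
\item $\Delta_{\Gamma'}$ is consistent as a subset of $\Theta$.
\item For each $\psi \in \sfset^*(\Gamma')$, either $\psi$ or ${\sim}\psi$ lies in $\Theta$. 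Since $\sfset^*(\Gamma')$ is closed under ${\sim}$ (for $\psi = \neg\chi \in \sfset^\Box(\Gamma')$ we have ${\sim}\psi = \chi \in \sfset(\Gamma')$), that formula also lies in $\Delta_{\Gamma'}$.
\end{itemize}

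Next, Lemma \ref{lem:truth-lemma-onepoint} gives $\Kmodel^{\Gamma'}_\Lambda, \Delta_{\Gamma'} \models \psi$ for every $\psi \in \Gamma' \subseteq \sfset^*(\Gamma')$. In particular $\Delta_{\Gamma'} \models \Gamma$ and $\Delta_{\Gamma'} \models \neg\varphi$, so $\Delta_{\Gamma'} \notmodels \varphi$.

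It remains to show that the underlying frame $(\{\Delta_{\Gamma'}\}, G, \cl_\Lambda)$ belongs to $\mathbf{CF}_\Lambda$. Then $\Gamma \not\models_\Lambda \varphi$, which is the contrapositive of the theorem. Lemma \ref{lem:corresponding-lemma-onepoint} supplies this. The conditions \textsf{(M)}, \textsf{(C)}, \textsf{(N)}, \textsf{(K)} refer only to $G$ and $\cl$, not to the valuation, so verifying them on the model is the same as verifying them on the frame. Because $\Lambda \subseteq \{\mathsf{M,C,N,K}\}$, no condition \textsf{(E)} or \textsf{(G)} mentioning frame validity is involved.

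The only point needing care is the detail behind Lemma \ref{lem:corresponding-lemma-onepoint}: we must check that $\cl_\Lambda(X)$ really is closed under the relevant operations. This holds because any two members of $\cl_\Lambda(X)$ already appear at some common finite stage $\Gamma^\Lambda_n$, since the stages form an increasing chain. The corresponding derived formula therefore appears at stage $n+1$.
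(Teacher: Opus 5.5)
Your proposal is correct and follows the paper's proof essentially step for step: you extend $\Gamma\cup\{\neg\varphi\}$ to an $\sfset^*$-maximally $L_\Lambda$-consistent set, form the one-point model $\Kmodel^{\Gamma'}_\Lambda$, and conclude with Lemma \ref{lem:truth-lemma-onepoint} and Lemma \ref{lem:corresponding-lemma-onepoint}. Your extra checks fill in details the paper leaves implicit: you obtain $\Delta_{\Gamma'}\subseteq\sfset^*(\Gamma')$ by intersecting a full Lindenbaum extension with the ${\sim}$-closed set $\sfset^*(\Gamma')$, and you use the increasing-chain argument to show that $\cl_\Lambda(X)$ is closed under the relevant operations.
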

\begin{proof}
    Suppose that $\Gamma \notvdash_\Lambda \varphi$. Then, the set $\Sigma = \Gamma \cup \{ \neg \varphi \}$ is $L_\Lambda$-consistent. By Lindenbaum's Lemma, we can make a $\sfset^*(\Sigma)$-maximally $L_\Lambda$-consistent set $\Delta_\Sigma$ such that $\Gamma \cup \{ \neg \varphi \} \subseteq \Delta_\Sigma$. Then, by Lemma \ref{lem:truth-lemma-onepoint}, we have a model $\Kmodel_\Lambda^\Sigma$ and its world $\Delta_\Sigma$ such that $\Delta_\Sigma \models \Gamma$ and $\Delta_\Sigma \notmodels \varphi$. Moreover, $\Kmodel_\Lambda^\Sigma$ satisfies all properties \textsf{(X)} such that $\mathsf{X} \in \Lambda$ by Lemma \ref{lem:corresponding-lemma-onepoint}. Therefore, we have $\Gamma \notmodels_\Lambda \varphi$.
\end{proof}
In particular, we have that $\mathbf{ML}_0$ is sound and complete with respect to the class of closure frames $\mathbf{CF}$.

In addition to completeness, we establish the finite model property and decidability of the logics considered above.
We formulate both properties with respect to a fixed modal formula $\varphi$, not to a set $\Gamma$ of formulas. 
That is, the finite model property provides a finite countermodel whenever $\notvdash_\Lambda \varphi$, and decidability amounts to determining in finite time whether $\vdash_\Lambda \varphi$.
The former follows immediately from the preceding completeness construction, while the latter follows from its effective character.

\begin{corollary}
    Let $\Lambda \subseteq \{ \mathsf{M, C, N, K} \}$. Then, $L_\Lambda$ has the finite model property.
    \label{cor:fmp}
\end{corollary}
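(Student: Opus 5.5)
The plan is to reuse the one-point countermodel from the proof of Theorem \ref{thm:completeness-onepoint}, taking $\Gamma = \emptyset$, and to check that it is finite in the sense of the paper. Suppose $\notvdash_\Lambda \varphi$. Then $\Sigma = \{\neg\varphi\}$ is $L_\Lambda$-consistent. By Lindenbaum's lemma we obtain a $\sfset^*(\Sigma)$-maximally $L_\Lambda$-consistent set $\Delta_\Sigma$ with $\Sigma \subseteq \Delta_\Sigma \subseteq \sfset^*(\Sigma)$, and we form the model $\Kmodel^\Sigma_\Lambda$ of Definition \ref{def:countermodel-onepoint}. By Lemma \ref{lem:truth-lemma-onepoint}, $\Delta_\Sigma \notmodels \varphi$. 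By Lemma \ref{lem:corresponding-lemma-onepoint}, the underlying frame lies in $\mathbf{CF}_\Lambda$. So $\varphi$ fails in a model of the right class, and only finiteness remains to be checked.

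For finiteness, recall that a closure model is finite when $W$ is finite and every $G(x)$ is finite. Here $W = \{\Delta_\Sigma\}$ is a singleton. Also $G(\Delta_\Sigma) \subseteq \sfset^*(\Sigma)$ by definition, so it suffices to show that $\sfset^*(\{\neg\varphi\})$ is finite. This holds because $\sfset(\neg\varphi)$ is finite, since a formula has finitely many subformulas. Then $\sfset^\Box$ adds at most one boxed copy of each subformula together with $\top$ and $\Box\top$. Finally, $\sfset^*$ at most doubles the size of the set by adding $\sim\psi$ for each of its elements. Hence $|\sfset^*(\Sigma)| \le 2(2|\sfset(\neg\varphi)|+2)$.

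The only subtle point is that the closure $\cl_\Lambda(G(\Delta_\Sigma))$ may well be infinite; for example, under \textsf{C} it keeps forming new conjunctions. This does not matter, because the paper's notion of a finite model constrains only $W$ and the generators, not the closure. That is exactly why the generator/closure split is useful here. One should also confirm that the model needs no restriction to a \emph{finite} closure for the truth lemma. This is fine, since Lemma \ref{lem:truth-lemma-onepoint} was proved for the unrestricted $\cl_\Lambda$. Once this is noted, the corollary follows directly.
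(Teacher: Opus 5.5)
Your proof is correct and follows the paper's own argument. You take the one-point model $\Kmodel^{\{\neg\varphi\}}_\Lambda$, which falsifies $\varphi$ by the truth lemma, and observe that it is finite because $W$ is a singleton and $G(\Delta)\subseteq\sfset^*(\neg\varphi)$ is finite; your explicit size bound and your remark that only the generator, not $\cl_\Lambda(G(\Delta))$, needs to be finite spell out what the paper leaves implicit.
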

\begin{proof}
    Suppose that $\notvdash_\Lambda \varphi$. In Definition \ref{def:countermodel-onepoint}, assume that $\Gamma = \{ \neg \varphi \}$.
    Then, the model $\Kmodel^\Gamma_\Lambda$ is a countermodel for $\varphi$.
    Moreover, $W$ is finite and $G(\Delta_\Gamma)$ is also finite since $\sfset^*(\neg \varphi)$ is finite.
\end{proof}

\begin{corollary}
    Let $\Lambda \subseteq \{ \mathsf{M, C, N, K} \}$. Then $L_\Lambda$ is decidable.
    In fact, $L_\Lambda$ is primitive recursive.
    \label{cor:decidability}
\end{corollary}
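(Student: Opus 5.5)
The plan is to turn the countermodel construction of Theorem \ref{thm:completeness-onepoint} into an explicit decision procedure and check that each of its steps uses only bounded search. Given $\varphi$, we first compute the finite set $S = \sfset^*(\{\neg\varphi\})$; its size is linear in the length of $\varphi$. By soundness (Theorem \ref{thm:soundness-closure-frame}) and the proof of Corollary \ref{cor:fmp}, $\notvdash_\Lambda \varphi$ holds exactly when some one-point model $\Kmodel^{\{\neg\varphi\}}_\Lambda$ falsifies $\varphi$. So instead of searching for maximally consistent sets, which requires testing consistency and would make the procedure circular, we enumerate candidate \emph{semantic} data and evaluate them directly.

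First, we find a finite search space. Every candidate is a one-point model. Its valuation is determined by a subset of the propositional variables in $\varphi$, and its generator is a subset $G \subseteq \{\psi \mid \Box\psi \in S\}$. There are at most $2^{|S|}$ choices of each. For a candidate $(V, G)$ we must decide whether $\Box\psi$ holds for each $\Box\psi \in \sfset(\varphi)$, that is, whether $\psi \in \cl_\Lambda(G)$.

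The key step, and the main obstacle, is that $\cl_\Lambda(G)$ is infinite when $\mathsf{C} \in \Lambda$, since clause \ref{closure-construction-case-c} keeps producing new conjunctions. We will show that membership of a given formula $\psi$ in $\cl_\Lambda(G)$ is decidable within a primitive recursive bound. We proceed as follows.
\begin{itemize}
  \item Clauses \ref{closure-construction-case-m} and \ref{closure-construction-case-k} only produce subformulas of formulas already present.
  \item Clause \ref{closure-construction-case-c} only produces formulas whose length is greater than that of both conjuncts.
\end{itemize}
Hence any derivation of $\psi$ can be normalised so that every formula used in it lies in the finite set $T_\psi$ of conjunctions, of nesting length at most $|\psi| + \max_{\chi\in G}|\chi|$, built from subformulas of $G \cup \{\psi\}$. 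This is the step I expect to require care: an application of \textsf{K} or \textsf{M} may need to ``decompose'' a large conjunction that was built by \textsf{C}. I would handle it by observing that every formula in $\cl_\Lambda(G)$ is a (possibly nested) conjunction of subformulas of $G$, so its length is bounded in terms of the subformulas it uses. The safer alternative is to iterate the closure clauses restricted to $T_\psi$ until a fixpoint is reached, which happens in at most $|T_\psi|$ rounds since each round is monotone. Either way we obtain a bounded loop, and so a primitive recursive procedure.

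Finally, the procedure evaluates $\varphi$ on each candidate by the truth clauses. It outputs ``$\notvdash_\Lambda\varphi$'' iff some candidate falsifies $\varphi$ and its frame satisfies the conditions \textsf{(X)} for $\mathsf{X}\in\Lambda$. The conditions hold automatically because we use $\cl_\Lambda$, as in Lemma \ref{lem:corresponding-lemma-onepoint}. Correctness then follows in both directions.
\begin{itemize}
  \item If some candidate falsifies $\varphi$, soundness gives $\notvdash_\Lambda\varphi$.
  \item If $\notvdash_\Lambda\varphi$, the model of Corollary \ref{cor:fmp} is among the candidates. Its generator is a subset of $S$ of the required form, and, by Lemma \ref{lem:truth-lemma-onepoint}, its truth values on $\sfset(\varphi)$ are correctly computed.
\end{itemize}
All loops are bounded by exponentials in $|\varphi|$, so $L_\Lambda$ is primitive recursive.
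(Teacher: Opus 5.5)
Your overall plan matches the paper's: enumerate the finitely many one-point candidates determined by $\sfset^*(\neg\varphi)$, decide each $\psi\in\cl_\Lambda(G)$ by a bounded computation, and obtain correctness from soundness and the countermodel of Corollary~\ref{cor:fmp}. The gap is the step you flag, and neither of your two ways of handling it closes it. That the \textsf{M}- and \textsf{K}-clauses produce subformulas while the \textsf{C}-clause produces longer formulas does not rule out a derivation of $\psi$ that builds a large conjunction and then takes it apart again, so your ``hence'' is unsupported. Your remark that every member of $\cl_\Lambda(G)$ is a nested conjunction of subformulas of $G\cup\{\top\}$ describes the \emph{members} of the closure. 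It says nothing about which intermediate formulas a derivation of a given $\psi$ must use, so it does not justify the nesting bound in $T_\psi$. The ``safer alternative'' only shows that the iteration restricted to $T_\psi$ halts within $|T_\psi|$ rounds. That its output equals $\cl_\Lambda(G)\cap T_\psi$, i.e.\ that restriction loses nothing, is exactly the normalisation claim. As written, the direction ``$\psi\in\cl_\Lambda(G)$ implies the procedure finds $\psi$'' is unproved, and with it the claim that the candidates' truth values are computed correctly.

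The missing idea is an asymmetry between the clauses. A conjunction abbreviates a negated implication, so it is never of the form $\alpha\to\beta$ and cannot serve as the implication premise of the \textsf{K}-clause. The \textsf{M}-clause applied to a conjunction produced by the \textsf{C}-clause only returns conjuncts already present. Concretely, fix a finite subformula-closed $T'\supseteq G\cup\{\top,\psi\}$. Let $R'$ be the result of iterating the clauses of Definition~\ref{def:closure-onepoint} from $G^\Lambda_0$ while only adding formulas in $T'$. Show by induction on $n$ that $G^\Lambda_n\cap T'\subseteq R'$, and that every member of $G^\Lambda_n\setminus T'$ is a conjunction whose conjuncts already lie in $G^\Lambda_n$. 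In the \textsf{M}-case, the premise conjunction either lies in $T'$ (so its conjuncts lie in $T'$ and in $R'$), or its conjuncts were already present. In the \textsf{K}-case, the premise $\alpha\to\chi$ is not a conjunction, hence lies in $T'\cap R'$, and then so do $\alpha$ and $\chi$. In the \textsf{C}-case, the new conjunction either lies in $T'$ together with its conjuncts, or has the stated form outside $T'$. Since the reverse inclusion is trivial, $\cl_\Lambda(G)\cap T'=R'$. This is exactly the fact the paper invokes with $T'=\sfset(G\cup\{\psi,\top\})$. No conjunctions beyond subformulas are ever needed, and the number of rounds is bounded by $|T'|$, which is linear in $|\varphi|$. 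Because $T_\psi\cup\{\top\}$ contains $\sfset(G\cup\{\psi,\top\})$, your procedure is also correct once this lemma is in place, but the extra bounded-nesting conjunctions play no role.
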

\begin{proof}
    Fix a modal formula $\varphi$.
    Since $\sfset^*(\neg \varphi)$ is finite, there are only finitely many subsets $\Delta \subseteq \sfset^*(\neg \varphi)$.
    For each such $\Delta$, we construct the one-point model $\Kmodel = (W, G, \cl, V)$ as follows:
    \begin{itemize}
        \item $W = \{ \Delta \}$
        \item $G(\Delta) = \{ \psi \in \sfset^*(\neg \varphi) \mid \Box \psi \in \Delta \}$
        \item $\cl = \cl_\Lambda$
        \item $V(p) = \{ \Delta \}$ if $p \in \Delta$, and $V(p) = \emptyset$ otherwise.
    \end{itemize}
    These are finite models, and by the definition of $\cl_\Lambda$, each such model satisfies all the conditions \textsf{(X)} with $\mathsf{X} \in \Lambda$. Moreover, the relation $\psi \in \cl_\Lambda(G(\Delta))$ is primitive recursive.
    Indeed, for a fixed $\psi$, it suffices to perform the construction of Definition \ref{def:closure-onepoint} within the finite set of subformulas of $G(\Delta)\cup\{\psi,\top\}$.
    This restricted construction stabilizes after a primitively recursively bounded number of stages, and each stage is primitive recursive.
    Hence, for each candidate model, whether $\neg \varphi$ is true in it can be determined in finite time.
    
    If $\notvdash_\Lambda \varphi$, the countermodel constructed in Corollary \ref{cor:fmp} yields one of these candidates in which $\varphi$ is false.
    Conversely, if one of these candidates falsifies $\varphi$, then $\notvdash_\Lambda \varphi$ by soundness.
    Therefore, $\vdash_\Lambda \varphi$ if and only if $\varphi$ is true in all of the finitely many candidates above.
\end{proof}

From a purely semantic point of view, one could replace the pair $(G, \cl)$ with a single function $H: W \to \mathcal{P}(\MF)$ by putting $H(w) = \cl(G(w))$, and define $w \models \Box \varphi$ simply by $\varphi \in H(w)$.
Thus, the distinction between the generator $G$ and the closure $\cl$ is not essential for interpreting modal formulas.
However, this distinction becomes useful when the finite model property and decidability are considered.
The set $\cl(G(w))$ may be infinite even when the generator $G(w)$ is finite, so the pair $(G, \cl)$ provides a finite presentation of potentially infinite modal information.
Moreover, when membership in $\cl(G(w))$ is effectively decidable, as in the constructions above, this finite presentation makes it possible to search effectively through the relevant countermodels.
In this sense, the usefulness of closure semantics lies not only in its semantic representation of modal operators, but also in the separation of finite generators from an effectively generated closure.

\subsection{Completeness with Inference Rules}
\label{subsec:completeness-full}

If either \textsf{E} or \textsf{G} belongs to $\Lambda$, the finite one-point construction from the previous subsection is no longer sufficient.
Unlike the axiom schemata treated there, the rules \textsf{RE} and \textsf{Gen} involve validity and cannot be handled solely within a single finite fragment.
We therefore pass to the full canonical model whose worlds are all maximally $L_\Lambda$-consistent sets.

\begin{definition}
    We define $\cl_\Lambda$ for $\Lambda \subseteq \{ \mathsf{E, G, M, C, N, K} \}$. First, for a given set $\Gamma$ of formulas, we define a sequence $(\Gamma^\Lambda_n)_{n \in \omega}$ of sets as follows:
    \begin{enumerate}[label=\arabic*.]
        \item $\Gamma^\Lambda_0$ is the union of $\Gamma$ and the following sets:
        \begin{enumerate}[label=(\alph*)]
            \item $L_\Lambda$ if $\mathsf{G} \in \Lambda$
            \item $\{ \top \}$ if $\mathsf{N} \in \Lambda$.
        \end{enumerate}
        \item $\Gamma^\Lambda_{n+1}$ is defined in the same way as in Definition \ref{def:closure-onepoint}.
    \end{enumerate}
    Then, we define $\cl_\Lambda$ as $\cl_\Lambda(\Gamma) = \bigcup_{n \in \omega} \Gamma^\Lambda_n$.
    \label{def:closure-full}
\end{definition}

\begin{definition}
    We define $\Kmodel_\Lambda = (W, G, \cl, V)$ as follows:
    \begin{itemize}
        \item $W$ is the set of all maximally $L_\Lambda$-consistent sets.
        \item $G(\Gamma) = \{ \varphi \in \MF \mid \Box \varphi \in \Gamma \}$.
        \item $\cl = \cl_\Lambda$ defined in Definition \ref{def:closure-full}.
        \item $V(p) = \{ \Gamma \mid p \in \Gamma \}$.
    \end{itemize}
    \label{def:countermodel-full}
\end{definition}
Observe that the model $\Kmodel_\Lambda$ defined above is no longer finite, since its worlds are all maximally $L_\Lambda$-consistent sets rather than maximally consistent sets over a fixed finite fragment.

\begin{lemma}
    For any $\Gamma \in W$ and $\varphi \in \MF$, if $\varphi \in \cl(G(\Gamma))$, then $\Box \varphi \in \Gamma$.
    \label{lem:reflection-full}
\end{lemma}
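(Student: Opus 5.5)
I will mirror the proof of Lemma \ref{lem:reflection-onepoint}, proving by induction on $n$ the claim: for every $\varphi$, if $\varphi \in (G(\Gamma))^\Lambda_n$ then $\Box\varphi \in \Gamma$. Because $\Gamma$ is now a maximally $L_\Lambda$-consistent set over all of $\MF$, there is no fragment restriction. It is therefore enough to show $\Gamma \vdash_\Lambda \Box\varphi$, since this gives $\Box\varphi \in \Gamma$ by maximality: otherwise $\sim\Box\varphi = \neg\Box\varphi \in \Gamma$, and $\Gamma$ would be inconsistent. The lemma then follows because $\cl_\Lambda(G(\Gamma)) = \bigcup_n (G(\Gamma))^\Lambda_n$.

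For the base case $n=0$, an element $\varphi$ of $(G(\Gamma))^\Lambda_0$ falls into one of three cases.
\begin{itemize}
\item If $\varphi \in G(\Gamma)$, then $\Box\varphi \in \Gamma$ by the definition of $G$.
\item If $\varphi = \top$ was added because $\mathsf{N} \in \Lambda$, then $\Box\top \in L_\Lambda$ is an axiom, so $\Gamma \vdash_\Lambda \Box\top$.
\item If $\varphi \in L_\Lambda$ was added because $\mathsf{G} \in \Lambda$, then $L_\Lambda$ is closed under \textsf{Gen}, so $\Box\varphi \in L_\Lambda$ and hence $\Gamma \vdash_\Lambda \Box\varphi$ with $n=0$ premises.
\end{itemize}
This third case is the only new ingredient compared with Lemma \ref{lem:reflection-onepoint}, and it works precisely because the closure seeds exactly the theorems of $L_\Lambda$, which \textsf{Gen} boxes.

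For the inductive step, suppose $\varphi \in (G(\Gamma))^\Lambda_{k+1} \setminus (G(\Gamma))^\Lambda_k$ and split on the clause that introduced $\varphi$.
\begin{itemize}
\item Clause \ref{closure-construction-case-m}: $\varphi \land \psi$ or $\psi \land \varphi$ lies in stage $k$. The induction hypothesis puts its box in $\Gamma$, and axiom \textsf{M} with modus ponens yields $\Gamma \vdash_\Lambda \Box\varphi$.
\item Clause \ref{closure-construction-case-c}: $\varphi = \psi\land\chi$ with $\psi, \chi$ in stage $k$. Then $\Box\psi, \Box\chi \in \Gamma$, and \textsf{C} gives $\Box(\psi\land\chi)$.
\item Clause \ref{closure-construction-case-k}: $\psi$ and $\psi\to\varphi$ lie in stage $k$. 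Then \textsf{K} gives $\Box\varphi$.
\end{itemize}
The rule \textsf{RE} plays no role here, since $\mathsf{E}$ contributes nothing to the construction of $\cl_\Lambda$.

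I do not expect a genuine obstacle; the point needing care is the \textsf{Gen} base case, namely checking that membership of $\varphi$ in $L_\Lambda$ really yields $\Box\varphi \in L_\Lambda$. This is immediate from the definition of $L_\Lambda$ as closed under \textsf{Gen} when $\mathsf{G}\in\Lambda$, and membership of theorems in every maximally consistent set is then standard.
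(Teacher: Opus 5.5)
Your proposal is correct and follows essentially the same approach as the paper: the stage-by-stage induction of Lemma~\ref{lem:reflection-onepoint}. As you note, the only new case is $\varphi \in L_\Lambda$ at stage $0$ when $\mathsf{G}\in\Lambda$, and you handle it by \textsf{Gen} exactly as the paper does.
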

\begin{proof}
    The proof proceeds as in Lemma \ref{lem:reflection-onepoint}.
    
    The only additional case is that $\varphi \in (G(\Gamma))^\Lambda_0$ because $\varphi \in L_\Lambda$.
    In this case, we have $\mathsf{G} \in \Lambda$, so $\Box\varphi \in L_\Lambda$ by \textsf{Gen}, and hence $\Gamma \vdash_\Lambda \Box\varphi$.
\end{proof}

\begin{lemma}
    For any $\Gamma \in W$ and any $\varphi \in \MF$, $\Kmodel_\Lambda, \Gamma \models \varphi$ if and only if $\varphi \in \Gamma$.
    \label{lem:truth-lemma-full}
\end{lemma}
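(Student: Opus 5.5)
We argue by induction on the complexity of $\varphi$, exactly as in Lemma \ref{lem:truth-lemma-onepoint}, but now over the full canonical model $\Kmodel_\Lambda$.

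The propositional base case is immediate from the definition of $V$, and $\top$ belongs to every maximally $L_\Lambda$-consistent set. The Boolean cases $\neg\psi$ and $\psi\to\chi$ use the standard properties of maximally consistent sets:
\begin{itemize}
    \item exactly one of $\psi$ and $\sim\psi$ lies in $\Gamma$, and $\sim\psi$ is provably equivalent to $\neg\psi$;
    \item $\psi \to \chi \in \Gamma$ if and only if $\psi \notin \Gamma$ or $\chi \in \Gamma$.
\end{itemize}
Here maximality ranges over all of $\MF$ rather than over a finite fragment, so no restriction to $\sfset^*$ is needed in the induction.

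The only substantive case is $\varphi = \Box\psi$. For the left-to-right direction, suppose $\Kmodel_\Lambda, \Gamma \models \Box\psi$. By the satisfaction clause this means $\psi \in \cl_\Lambda(G(\Gamma))$, and Lemma \ref{lem:reflection-full} yields $\Box\psi \in \Gamma$. For the right-to-left direction, suppose $\Box\psi \in \Gamma$. Then $\psi \in G(\Gamma)$ by the definition of the generator. Since $G(\Gamma) \subseteq (G(\Gamma))^\Lambda_0 \subseteq \cl_\Lambda(G(\Gamma))$, we get $\Gamma \models \Box\psi$.

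Note that this case does not invoke the induction hypothesis on $\psi$ at all: truth of $\Box\psi$ depends only on syntactic membership of $\psi$ in $\cl_\Lambda(G(\Gamma))$. The real work has therefore already been done in Lemma \ref{lem:reflection-full}, in particular its treatment of the seed $L_\Lambda$ when $\mathsf{G} \in \Lambda$.

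The main point to check is that Lemma \ref{lem:reflection-full} in fact covers every stage of the construction, including the clauses for $\mathsf{M}$, $\mathsf{C}$ and $\mathsf{K}$ applied to arbitrary formulas rather than to those in a finite fragment. It does, because the claim inside Lemma \ref{lem:reflection-onepoint} was already stated for arbitrary $\varphi$. The final step from $\Gamma \vdash_\Lambda \Box\varphi$ to $\Box\varphi \in \Gamma$ now uses full maximality, so no condition $\Box\varphi \in \sfset^*$ is required.

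Verifying that $\Kmodel_\Lambda$ actually satisfies \textsf{(E)} and \textsf{(G)} is a separate matter left to the subsequent lemma, since those conditions refer to validity on the frame. Once the present truth lemma is available, that validity coincides with membership in $L_\Lambda$.
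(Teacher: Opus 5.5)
Your proof is correct and follows the paper's argument: induction on $\varphi$, with the $\Box\psi$ case obtained from the satisfaction clause plus Lemma~\ref{lem:reflection-full} in one direction and $G(\Gamma)\subseteq\cl_\Lambda(G(\Gamma))$ in the other, exactly as in Lemma~\ref{lem:truth-lemma-onepoint}. One small quibble with your closing aside: the truth lemma alone only shows that validity on the canonical frame implies membership in $L_\Lambda$, which is the direction the next lemma needs, while the reverse inclusion follows only afterwards by soundness.
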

\begin{proof}
    By induction on the complexity of $\varphi$.
    Particularly, the case $\varphi = \Box \psi$ is proved as in Lemma \ref{lem:truth-lemma-onepoint}, using Lemma \ref{lem:reflection-full}.
\end{proof}

\begin{lemma}
    The model $\Kmodel_\Lambda$ satisfies the property \textsf{(X)} if $\mathsf{X} \in \Lambda$.
    \label{lem:corresponding-lemma-full}
\end{lemma}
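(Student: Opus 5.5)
The plan is to treat each $\mathsf{X} \in \Lambda$ separately. The structural conditions \textsf{(M)}, \textsf{(C)}, \textsf{(N)}, \textsf{(K)} will follow directly from the construction of $\cl_\Lambda$ in Definition \ref{def:closure-full}, exactly as in Lemma \ref{lem:corresponding-lemma-onepoint}. If $\mathsf{N} \in \Lambda$, then $\top \in (G(\Gamma))^\Lambda_0$. If $\mathsf{M} \in \Lambda$ and $\varphi \land \psi \in \cl_\Lambda(G(\Gamma))$, then $\varphi \land \psi \in (G(\Gamma))^\Lambda_n$ for some $n$. Clause \ref{closure-construction-case-m} then puts $\varphi, \psi$ into stage $n+1$. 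For \textsf{(C)} and \textsf{(K)}, the two premises lie in stages $n$ and $m$; since the stages are increasing, both lie in stage $\max(n,m)$, and the conclusion appears at the next stage.

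For \textsf{(G)} and \textsf{(E)}, which refer to frame validity, the first step is to identify validity on the canonical frame with provability. Suppose $\Kframe_\Lambda \models \chi$. Then in particular $\Kmodel_\Lambda \models \chi$, so by the truth lemma (Lemma \ref{lem:truth-lemma-full}) $\chi$ belongs to every maximally $L_\Lambda$-consistent set. By Lindenbaum's lemma this yields $\chi \in L_\Lambda$: otherwise $\{\neg\chi\}$ would be consistent and extend to some $\Gamma \in W$ with $\chi \notin \Gamma$. No uniform-substitution argument is needed here, because we only pass from frame validity to validity in the canonical model.

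With this identification, \textsf{(G)} is immediate. If $\mathsf{G} \in \Lambda$ and $\Kframe_\Lambda \models \varphi$, then $\varphi \in L_\Lambda \subseteq (G(\Gamma))^\Lambda_0 \subseteq \cl_\Lambda(G(\Gamma))$.

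For \textsf{(E)}, assume $\Kframe_\Lambda \models \varphi \leftrightarrow \psi$. Then $\varphi \leftrightarrow \psi \in L_\Lambda$, so \textsf{RE} gives $\Box\varphi \leftrightarrow \Box\psi \in L_\Lambda$. We now need $\varphi \in \cl_\Lambda(G(\Gamma)) \iff \psi \in \cl_\Lambda(G(\Gamma))$. Lemma \ref{lem:reflection-full} gives one direction: $\varphi \in \cl_\Lambda(G(\Gamma))$ implies $\Box\varphi \in \Gamma$. Since $\Gamma$ is maximally consistent and contains $\Box\varphi \leftrightarrow \Box\psi$, we get $\Box\psi \in \Gamma$. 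Hence $\psi \in G(\Gamma) \subseteq \cl_\Lambda(G(\Gamma))$. The converse direction is symmetric.

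Put differently, the key observation is that $\cl_\Lambda(G(\Gamma)) = G(\Gamma)$: the inclusion $\supseteq$ holds by construction and $\subseteq$ is Lemma \ref{lem:reflection-full}. This also gives an alternative route to the structural cases via the axioms, but the construction-based argument is simpler.

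The main obstacle is the step from frame validity to membership in $L_\Lambda$, which uses the truth lemma together with Lindenbaum's lemma for full maximal consistency. Once that is in place, the \textsf{(E)} case also relies on Lemma \ref{lem:reflection-full} to collapse the closure onto the generator. The remaining cases are routine bookkeeping about the stages $(G(\Gamma))^\Lambda_n$.
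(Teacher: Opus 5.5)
Your proof is correct and takes essentially the same approach as the paper. The cases \textsf{(M)}, \textsf{(C)}, \textsf{(N)}, \textsf{(K)} are read off the stage-wise construction of $\cl_\Lambda$. For \textsf{(E)}, you pass from frame validity to membership in $L_\Lambda$ via the truth lemma and Lindenbaum's lemma, apply \textsf{RE}, and then use Lemma \ref{lem:reflection-full} to identify $\cl_\Lambda(G(\Gamma))$ with $G(\Gamma)$, exactly as the paper does.

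The one small deviation is the \textsf{(G)} case. You take $\varphi \in L_\Lambda \subseteq (G(\Gamma))^\Lambda_0$ directly from Definition \ref{def:closure-full}. The paper instead uses \textsf{Gen} to get $\Box\varphi \in \Gamma$ and hence $\varphi \in G(\Gamma)$. Both routes are valid, and yours is slightly shorter.
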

\begin{proof}
    We only prove the cases $\mathsf{E} \in \Lambda$ and $\mathsf{G} \in \Lambda$.
    The other cases are proved as in Lemma \ref{lem:corresponding-lemma-onepoint}. 
    Let $\Kframe_\Lambda$ be the underlying frame of $\Kmodel_\Lambda$.

    To prove the case $\mathsf{E} \in \Lambda$, suppose $\Kframe_\Lambda \models \varphi \leftrightarrow \psi$. In particular, we have $\Kmodel_\Lambda \models \varphi \leftrightarrow \psi$. This means that $\Gamma \models \varphi \leftrightarrow \psi$ for all $\Gamma \in W$, and that is equivalent to $\varphi \leftrightarrow \psi \in \Gamma$. Since $\bigcap_{\Gamma \in W} \Gamma = L_\Lambda$, we have $\varphi \leftrightarrow \psi \in L_\Lambda$. By \textsf{RE}, $\Box \varphi \leftrightarrow \Box \psi \in L_\Lambda$. Thus, $\Box \varphi \in \Gamma$ is equivalent to $\Box \psi \in \Gamma$. By the definition of $G$ and Lemma \ref{lem:reflection-full}, we have $\chi \in \cl_\Lambda(G(\Gamma))$ if and only if $\Box \chi \in \Gamma$ for all $\chi \in \MF$, and hence $\varphi \in \cl_\Lambda(G(\Gamma))$ if and only if $\psi \in \cl_\Lambda(G(\Gamma))$.

    Suppose now that $\mathsf{G} \in \Lambda$, suppose $\Kframe_\Lambda \models \varphi$, which implies we have $\Kmodel_\Lambda \models \varphi$. By the same argument as the previous case, we have $\varphi \in L_\Lambda$. By \textsf{Gen}, $\Box \varphi \in L_\Lambda$. Since $L_\Lambda \subseteq \Gamma$ for any $\Gamma$, we have $\Box \varphi \in \Gamma$. By the definition of $G$, we have $\varphi \in G(\Gamma)$, which yields $\varphi \in \cl_\Lambda(G(\Gamma))$ as in the previous case.
\end{proof}

\begin{theorem}
    Let $\Lambda \subseteq \{ \mathsf{E, G, M, C, N, K} \}$. Then, the logic $L_\Lambda$ is strongly complete with respect to the class $\mathbf{CF}_\Lambda$ of frames. That is, $\Gamma \models_\Lambda \varphi$ implies $\Gamma \vdash_\Lambda \varphi$.
\end{theorem}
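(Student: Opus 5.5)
We argue by contraposition, using the full canonical model $\Kmodel_\Lambda$ of Definition \ref{def:countermodel-full} exactly as the one-point model was used in Theorem \ref{thm:completeness-onepoint}. Suppose that $\Gamma \notvdash_\Lambda \varphi$. Then $\Gamma \cup \{\neg\varphi\}$ is $L_\Lambda$-consistent. Indeed, if $\gamma_1 \land \cdots \land \gamma_n \land \neg\varphi \to \bot \in L_\Lambda$, then $\gamma_1 \land \cdots \land \gamma_n \to \varphi \in L_\Lambda$ by classical reasoning, since $L_\Lambda$ contains all tautologies and is closed under modus ponens. By the full (non-relativized) form of Lindenbaum's lemma, we extend this set to a maximally $L_\Lambda$-consistent set $\Delta$. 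Hence $\Delta \in W$, the set of worlds of $\Kmodel_\Lambda$.

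Next we apply the truth lemma, Lemma \ref{lem:truth-lemma-full}, at the world $\Delta$. Since $\Gamma \subseteq \Delta$, we get $\Kmodel_\Lambda, \Delta \models \Gamma$. Since $\neg\varphi \in \Delta$ and $\Delta$ is consistent, $\varphi \notin \Delta$, so $\Kmodel_\Lambda, \Delta \notmodels \varphi$. By Lemma \ref{lem:corresponding-lemma-full}, the underlying frame $\Kframe_\Lambda$ satisfies every condition \textsf{(X)} with $\mathsf{X} \in \Lambda$, so $\Kframe_\Lambda \in \mathbf{CF}_\Lambda$. The model $\Kmodel_\Lambda = (\Kframe_\Lambda, V)$ is therefore a countermodel on a frame in $\mathbf{CF}_\Lambda$, which gives $\Gamma \notmodels_\Lambda \varphi$.

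The only step needing care is the frame-level reading of \textsf{(E)} and \textsf{(G)} in Lemma \ref{lem:corresponding-lemma-full}. These conditions quantify over validity in the frame, that is, over all valuations, while the canonical argument uses only truth under the canonical valuation $V$. This is harmless. Validity in $\Kframe_\Lambda$ implies validity in $\Kmodel_\Lambda$, which places the formula in every maximally consistent set, hence in $\bigcap W = L_\Lambda$. That intersection identity itself rests on Lindenbaum's lemma: any non-theorem $\psi$ makes $\{\neg\psi\}$ consistent, so some world omits $\psi$. The lemma is already established, so this theorem follows directly by assembling these pieces.
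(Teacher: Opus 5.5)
Your proposal is correct and follows the paper's own argument. You assume $\Gamma \notvdash_\Lambda \varphi$, extend $\Gamma \cup \{\neg\varphi\}$ to a maximally $L_\Lambda$-consistent set, and combine the truth lemma with the frame-condition lemma for the full canonical model $\Kmodel_\Lambda$; your added checks on the consistency of $\Gamma \cup \{\neg\varphi\}$ and on $\bigcap W = L_\Lambda$ simply make explicit details the paper leaves implicit.
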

\begin{proof}
    Similar argument as that for Theorem \ref{thm:completeness-onepoint}, using Lemmas \ref{lem:truth-lemma-full} and \ref{lem:corresponding-lemma-full}.
\end{proof}

\section{Representation Theorems}
\label{sec:representation}
In this section, we compare closure semantics with three existing semantic frameworks for modal logic: neighborhood semantics, FMT semantics for the modal logic $\mathbf{N}$, and ordinary Kripke semantics.
All of these semantics are based on a set of possible worlds equipped with additional structure, although the nature of that structure differs from one framework to another.
This common format allows us to compare the semantics directly at the level of models rather than only through the logics they determine.
Accordingly, we construct model-to-model translations and show that they preserve the truth of every modal formula.

Since the results of this section concern individual models and their translations, we use model-based versions of the conditions $\mathsf{(E_{\Kmodel})}$ and $\mathsf{(G_{\Kmodel})}$ rather than the frame-based versions \textsf{(E)} and \textsf{(G)} introduced earlier.
We first define these model-based conditions and relate them to the corresponding frame-based notions, and then use them throughout the remainder of the section.

\begin{description}
    \item[{$\mathsf{(E_{\Kmodel})}$}] For any $w \in \Kmodel$ and $\varphi, \psi \in \MF$, $\Kmodel \models \varphi \leftrightarrow \psi \implies (\varphi \in \cl(G(w)) \iff \psi \in \cl(G(w)))$.
    \item[{$\mathsf{(G_{\Kmodel})}$}] For any $w \in \Kmodel$ and $\varphi \in \MF$, $\Kmodel \models \varphi \implies \varphi \in \cl(G(w))$.
\end{description}

Analogously to $\mathbf{CF}_\Lambda$, we write $\mathbf{CM}_{\Lambda}$ to indicate the class of closure models satisfying every condition \textsf{(X)} with $X \in \Lambda$ for $\Lambda \subseteq \{ \mathsf{E, E_{\Kmodel}, G, G_{\Kmodel}, M, C, N, K} \}$. 
Moreover, the operator $\logicof(\cdot)$ is now applied to the class of models: $\logicof(\mathbf{CM}_\Lambda) = \bigcap_{\Kmodel \in \mathbf{CM}_\Lambda} \logicof(\Kmodel)$.
In fact, it can be shown that, with respect to the formulas valid in the corresponding classes of models, $\mathsf{(E)}$ and $\mathsf{(E_{\Kmodel})}$, as well as $\mathsf{(G)}$ and $\mathsf{(G_{\Kmodel})}$, define the same modal logics.

\begin{proposition}
Let $\Lambda \subseteq \{ \mathsf{E, G, M, C, N, K} \}$ and $\Lambda' \subseteq \{ \mathsf{E_{\Kmodel}, G_{\Kmodel}, M, C, N, K} \}$ such that $\mathsf{E} \in \Lambda$ if and only if $\mathsf{E_{\Kmodel}} \in \Lambda'$, $\mathsf{G} \in \Lambda$ if and only if $\mathsf{G_{\Kmodel}} \in \Lambda'$, and $\Lambda \cap \{ \mathsf{M, C, N, K} \} = \Lambda' \cap \{ \mathsf{M, C, N, K} \}$. Then,
\[
    \logicof(\mathbf{CM}_{\Lambda}) =
    \logicof(\mathbf{CM}_{\Lambda'}) = L_{\Lambda}.
\]
\end{proposition}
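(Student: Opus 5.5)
We prove the two equalities by showing the chain of inclusions
\[
    L_\Lambda \subseteq \logicof(\mathbf{CM}_{\Lambda'}) \subseteq \logicof(\mathbf{CM}_{\Lambda}) \subseteq L_\Lambda .
\]
The middle inclusion rests on the observation that every model whose underlying frame satisfies \textsf{(E)} (resp.\ \textsf{(G)}) satisfies $\mathsf{(E_{\Kmodel})}$ (resp.\ $\mathsf{(G_{\Kmodel})}$). Indeed, if $\Kframe$ is the underlying frame of $\Kmodel$, then $\Kframe \models \chi$ implies $\Kmodel \models \chi$. The premises of the model-based conditions, namely $\Kmodel \models \varphi \leftrightarrow \psi$ and $\Kmodel \models \varphi$, are weaker than the frame-based premises, so the model-based conditions are stronger. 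Hence $\mathbf{CM}_{\Lambda'} \subseteq \mathbf{CM}_{\Lambda}$, where $\mathbf{CM}_\Lambda$ is read as the class of models whose frames lie in $\mathbf{CF}_\Lambda$. The conditions \textsf{(M)}, \textsf{(C)}, \textsf{(N)}, \textsf{(K)} do not mention validity and are shared. Intersecting over a larger class gives a smaller logic, so $\logicof(\mathbf{CM}_{\Lambda}) \subseteq \logicof(\mathbf{CM}_{\Lambda'})$.

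Next, $\logicof(\mathbf{CM}_{\Lambda}) \subseteq L_\Lambda$ follows from the canonical model $\Kmodel_\Lambda$ of Definition \ref{def:countermodel-full}. By Lemma \ref{lem:corresponding-lemma-full}, its frame lies in $\mathbf{CF}_\Lambda$, so $\Kmodel_\Lambda \in \mathbf{CM}_\Lambda$. By Lemma \ref{lem:truth-lemma-full}, and since $\bigcap_{\Gamma\in W}\Gamma = L_\Lambda$, we have $\logicof(\Kmodel_\Lambda) = L_\Lambda$. In fact the proof of Lemma \ref{lem:corresponding-lemma-full} only ever used $\Kmodel_\Lambda \models \varphi \leftrightarrow \psi$ and $\Kmodel_\Lambda \models \varphi$. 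So $\Kmodel_\Lambda$ satisfies $\mathsf{(E_{\Kmodel})}$ and $\mathsf{(G_{\Kmodel})}$ as well, and therefore $\Kmodel_\Lambda \in \mathbf{CM}_{\Lambda'}$ too. This gives $\logicof(\mathbf{CM}_{\Lambda'}) \subseteq L_\Lambda$ directly.

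It remains to prove $L_\Lambda \subseteq \logicof(\mathbf{CM}_{\Lambda'})$, which is the real soundness content. Theorem \ref{thm:soundness-closure-frame} gives only $L_\Lambda \subseteq \logicof(\mathbf{CM}_\Lambda)$, and $\mathbf{CM}_{\Lambda'}$ is the smaller class, so it does not apply. Fix $\Kmodel \in \mathbf{CM}_{\Lambda'}$ and show by induction on derivations that $\logicof(\Kmodel)$ contains $L_\Lambda$.

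\begin{itemize}
    \item Tautologies and the axioms \textsf{M}, \textsf{C}, \textsf{N}, \textsf{K} are handled pointwise, as in Proposition \ref{prop:correspondence}.
    \item Modus ponens preserves truth in $\Kmodel$.
    \item \textsf{RE} is handled by $\mathsf{(E_{\Kmodel})}$: if $\Kmodel \models \varphi\leftrightarrow\psi$, then $\Kmodel \models \Box\varphi\leftrightarrow\Box\psi$.
    \item \textsf{Gen} is handled by $\mathsf{(G_{\Kmodel})}$.
\end{itemize}

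The main obstacle is uniform substitution, because $\logicof(\Kmodel)$ for a single model need not be closed under it. I would first try to sidestep this by proving that $L_\Lambda$ equals the closure of all substitution instances of the axioms under modus ponens, \textsf{RE} and \textsf{Gen}. This is the usual fact that substitution commutes with these rules, since the axioms are schemata and the rules are schematic. The induction above then only needs to handle instances, and those are verified directly in $\Kmodel$ without any appeal to substitution.
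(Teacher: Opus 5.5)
Your ingredients are right, and the completeness half is the paper's own argument. The canonical model of Definition~\ref{def:countermodel-full} satisfies $\mathsf{(E_{\Kmodel})}$ and $\mathsf{(G_{\Kmodel})}$, because the proof of Lemma~\ref{lem:corresponding-lemma-full} only uses validity in $\Kmodel_\Lambda$. So it lies in $\mathbf{CM}_{\Lambda'}$ and refutes every non-theorem.

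The bookkeeping around this is backwards, though. The chain you announce has middle link $\logicof(\mathbf{CM}_{\Lambda'}) \subseteq \logicof(\mathbf{CM}_{\Lambda})$. The argument you give for it proves the converse, $\logicof(\mathbf{CM}_{\Lambda}) \subseteq \logicof(\mathbf{CM}_{\Lambda'})$, as you yourself conclude. For the same reason, your remark that Theorem~\ref{thm:soundness-closure-frame} ``does not apply'' because $\mathbf{CM}_{\Lambda'}$ is the smaller class is mistaken: soundness over a class descends to every subclass. The correct chain is
\[
L_\Lambda \subseteq \logicof(\mathbf{CM}_\Lambda) \subseteq \logicof(\mathbf{CM}_{\Lambda'}) \subseteq L_\Lambda .
\]
The first link is Theorem~\ref{thm:soundness-closure-frame}, the second follows from $\mathbf{CM}_{\Lambda'}\subseteq\mathbf{CM}_\Lambda$, and the third comes from the canonical model. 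This is exactly the paper's proof. In particular, $L_\Lambda \subseteq \logicof(\mathbf{CM}_\Lambda)$ must be taken directly from Theorem~\ref{thm:soundness-closure-frame}, which you mention in passing, and not from your chain, which reaches it only through the reversed link.

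Your worry about uniform substitution is nevertheless legitimate, and it affects Theorem~\ref{thm:soundness-closure-frame} just as much as single models. In closure semantics even frame validity is not closed under substitution. For example, a one-world frame with $G(w)=\{p\}$ and $\cl=\mathrm{id}$ validates $\Box p$ but not $\Box q$. So the paper's claim that soundness follows immediately from Proposition~\ref{prop:correspondence} silently needs the fact you isolate: $L_\Lambda$ is the closure of the instances of tautologies and of the relevant schemata under modus ponens, \textsf{RE} and \textsf{Gen}. Your normal-form argument, that substitution commutes with these rules, is correct. Run at the frame level using \textsf{(E)} and \textsf{(G)}, it is what actually justifies Theorem~\ref{thm:soundness-closure-frame}. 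Run at the model level, as you do, it independently gives $L_\Lambda\subseteq\logicof(\mathbf{CM}_{\Lambda'})$. So your last paragraph is not wrong, only misplaced: it belongs to the proof of soundness, and is not forced by passing to the smaller class.
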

\begin{proof}
By soundness and completeness, we have $\logicof(\mathrm{CM}_\Lambda) = L_\Lambda$.
Moreover, the canonical model constructed in Definition \ref{def:countermodel-full} satisfies $(\mathsf{E}_{\Kmodel})$ whenever $\mathsf{E} \in \Lambda$ and $(\mathsf{G}_{\Kmodel})$ whenever $\mathsf{G} \in \Lambda$ by the same argument as in the proof of Lemma \ref{lem:corresponding-lemma-full}.
Hence, if $\varphi\notin L_\Lambda$, this canonical model belongs to $\mathrm{CM}_{\Lambda'}$ and falsifies $\varphi$.
Together with soundness, this yields $\logicof(\mathrm{CM}_{\Lambda'})=L_\Lambda$.
\end{proof}

Thus, although the model-based conditions are in general stronger than their frame-based counterparts, replacing \textsf{(E)} and \textsf{(G)} with $(\mathsf{E}_{\Kmodel})$ and $(\mathsf{G}_{\Kmodel})$ does not change the resulting modal logics.
Since the remainder of this section concerns model-to-model representations, we henceforth write \textsf{(E)} and \textsf{(G)} for the model-based conditions $(\mathsf{E}_{\Kmodel})$ and $(\mathsf{G}_{\Kmodel})$, respectively.

\subsection{Closure Models and Neighborhood Models}
\label{subsec:closure-and-neighborhood}

\emph{Neighborhood semantics} interprets the modal operator by assigning to each world a collection of subsets of the set of possible worlds, called its \emph{neighborhoods}.
A formula $\Box\varphi$ is true at a world $w$ exactly when the truth set of $\varphi$ is one of the neighborhoods of $w$.
In this subsection, we compare neighborhood models with closure models by constructing truth-preserving translations in both directions.

\begin{definition}
    A \emph{neighborhood frame} is a tuple $\Kframe_N = (W, N)$ where $W$ is a non-empty set and $N$ is a function $N: W \to \powerset(\powerset(W))$. A \emph{neighborhood model} is a tuple $\Kmodel_N = (W, N, V)$ where $(W, N)$ is a neighborhood frame and $V$ is a valuation function $V: \Prop \to \powerset(W)$.
\end{definition}

For any model $\Kmodel$ and any formula $\varphi$, we write
\[
\truthset{\varphi}_{\Kmodel} = \{ w \in \Kmodel \mid  \Kmodel, w \models \varphi \}
\]
for the truth set of $\varphi$ in $\Kmodel$.
We use this notation throughout the remainder of the paper.

\begin{definition}
    The relation $\Kmodel_N, w \models \varphi$ is defined as for closure models, except for the modal operator, which is defined as follows:
    \[
        \Kmodel_N, w \models \Box \varphi \iff \truthset{\varphi}_{\Kmodel_N} \in N(w).
    \]
\end{definition}
The logic $\mathbf{E} = L_\mathsf{E}$ is sound and complete with respect to the class of neighborhood frames (see, e.g., \cite{chellas1980}).
$\mathbf{NM}$ denotes the class of all neighborhood models.

\begin{definition}
    Given a neighborhood model $\Kmodel_N = (W, N, V)$, we define $f_N(\Kmodel_N) = (W, G^{f_N}, \mathrm{id}, V)$ where
    \[
        G^{f_N}(w) = \{ \varphi \in \MF \mid \truthset{\varphi}_{\Kmodel_N} \in N(w) \}.
    \]
    \label{def:neighborhood-to-closure-function}
\end{definition}

\begin{lemma}
    Let $\Kmodel_N$ be a neighborhood model. Then, for any $w \in \Kmodel_N$ and $\varphi \in \MF$, $\Kmodel_N, w \models \varphi$ if and only if $f_N(\Kmodel_N), w \models \varphi$.
    \label{lem:neighborhood-to-closure-preserving}
\end{lemma}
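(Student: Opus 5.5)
We argue by induction on the complexity of $\varphi$, simultaneously for all $w \in W$. The two models share the same set of worlds $W$ and the same valuation $V$, so the base cases $\varphi = p$ and $\varphi = \top$ are immediate. The Boolean cases $\neg\varphi$ and $\varphi \to \psi$ follow directly from the induction hypothesis, because the satisfaction clauses for these connectives are literally identical in the two semantics.

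The only substantive case is $\varphi = \Box\psi$. On the closure side, the closure of $f_N(\Kmodel_N)$ is $\mathrm{id}$, so
\[
f_N(\Kmodel_N), w \models \Box\psi \iff \psi \in \mathrm{id}(G^{f_N}(w)) = G^{f_N}(w).
\]
By the definition of $G^{f_N}$ in Definition \ref{def:neighborhood-to-closure-function}, this holds if and only if $\truthset{\psi}_{\Kmodel_N} \in N(w)$. That condition is precisely the neighborhood clause for $\Kmodel_N, w \models \Box\psi$, which closes the case.

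The case goes through without invoking the induction hypothesis for $\psi$. This is because the generator is defined using the neighborhood truth set $\truthset{\psi}_{\Kmodel_N}$ directly, not the truth set in the translated model. The point to flag is therefore only that there is no circularity. $G^{f_N}$ is defined outright from $\Kmodel_N$, so $f_N(\Kmodel_N)$ is a well-defined closure model before any truth lemma is proved. The induction then merely confirms that $\truthset{\chi}_{\Kmodel_N} = \truthset{\chi}_{f_N(\Kmodel_N)}$ for every formula $\chi$.

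I expect no genuine obstacle here. The only care needed is to keep the two truth sets notationally distinct, so that the reader sees the $\Box$ case is definitional rather than dependent on the hypothesis.
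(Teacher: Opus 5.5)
Your proof is correct and matches the paper's: induction on $\varphi$, with the modal case reduced to the chain $\Kmodel_N, w \models \Box\psi \iff \truthset{\psi}_{\Kmodel_N} \in N(w) \iff \psi \in G^{f_N}(w) = \mathrm{id}(G^{f_N}(w)) \iff f_N(\Kmodel_N), w \models \Box\psi$. You are also right that this case is purely definitional and does not use the induction hypothesis, and the paper's chain of equivalences does not use it either.
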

\begin{proof}
    By induction on the complexity of $\varphi$. The atomic case is immediate because $f_N$ preserves the valuation. The Boolean cases are straightforward. Thus, it remains only to consider the case $\varphi = \Box \psi$, which is proven as follows:
    \begin{align*}
        \Kmodel_N, w \models \Box \psi &\iff \truthset{\psi}_{\Kmodel_N} \in N(w) \\
        &\iff \psi \in G^{f_N}(w) \\
        &\iff \psi \in \mathrm{id}(G^{f_N}(w)) \\
        &\iff f_N(\Kmodel_N), w \models \Box \psi. \qedhere
    \end{align*}
\end{proof}

\begin{lemma}
    For any neighborhood model $\Kmodel_N$, the model $f_N(\Kmodel_N)$ has the property \textsf{(E)}.
    \label{lem:neighborhood-has-E}
\end{lemma}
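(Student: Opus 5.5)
We must show that $f_N(\Kmodel_N)$ satisfies the model-based condition $\mathsf{(E_{\Kmodel})}$, which is what \textsf{(E)} means in this section. So fix $w \in W$ and formulas $\varphi, \psi$ with $f_N(\Kmodel_N) \models \varphi \leftrightarrow \psi$. We then have to prove that $\varphi \in \mathrm{id}(G^{f_N}(w))$ if and only if $\psi \in \mathrm{id}(G^{f_N}(w))$. Since $\mathrm{id}(G^{f_N}(w)) = G^{f_N}(w)$, this reduces to showing $\varphi \in G^{f_N}(w) \iff \psi \in G^{f_N}(w)$.

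The key step is to transfer the validity assumption back to the neighborhood model. We use Lemma \ref{lem:neighborhood-to-closure-preserving}, applied to $\varphi$ and $\psi$ at every world $v \in W$. Since $f_N(\Kmodel_N)$ and $\Kmodel_N$ share the same set of worlds, this gives $\truthset{\varphi}_{f_N(\Kmodel_N)} = \truthset{\varphi}_{\Kmodel_N}$, and likewise for $\psi$. Validity of $\varphi \leftrightarrow \psi$ in $f_N(\Kmodel_N)$ says exactly that $\truthset{\varphi}_{f_N(\Kmodel_N)} = \truthset{\psi}_{f_N(\Kmodel_N)}$. Hence $\truthset{\varphi}_{\Kmodel_N} = \truthset{\psi}_{\Kmodel_N}$.

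Finally, we unfold the definition of $G^{f_N}$:
\[
\varphi \in G^{f_N}(w) \iff \truthset{\varphi}_{\Kmodel_N} \in N(w) \iff \truthset{\psi}_{\Kmodel_N} \in N(w) \iff \psi \in G^{f_N}(w).
\]
This is the required equivalence.

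There is no real obstacle here. The only point needing care is that the condition is the model-based one, so validity is taken in $f_N(\Kmodel_N)$ under its fixed valuation $V$ and not over all valuations. That is exactly what lets the truth-preservation lemma identify the two truth sets. If one wanted the frame-based \textsf{(E)} instead, the same argument would go through, because frame validity implies validity in this particular model.
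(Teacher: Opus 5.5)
Your proof is correct and follows the paper's argument step for step: you transfer the equality of truth sets from $f_N(\Kmodel_N)$ to $\Kmodel_N$ via Lemma~\ref{lem:neighborhood-to-closure-preserving}, then unfold the definition of $G^{f_N}$ using $\mathrm{id}(G^{f_N}(w)) = G^{f_N}(w)$. Your closing remark is also right: the frame-based \textsf{(E)} follows too, since frame validity implies validity in this particular model.
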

\begin{proof}
    Suppose that $f_N(\Kmodel_N) \models \varphi \leftrightarrow \psi$.
    \begin{align*}
        f_N(\Kmodel_N) \models \varphi \leftrightarrow \psi &\iff \truthset{\varphi}_{f_N(\Kmodel_N)} = \truthset{\psi}_{f_N(\Kmodel_N)} & \\
        &\iff \truthset{\varphi}_{\Kmodel_N} = \truthset{\psi}_{\Kmodel_N} &\text{Lemma \ref{lem:neighborhood-to-closure-preserving}} \\
        &\implies (\varphi \in G_{f_N}(w) \iff \psi \in G_{f_N}(w)) & \\
        &\iff (\varphi \in \mathrm{id}(G^{f_N}(w)) \iff \psi \in \mathrm{id}(G^{f_N}(w))). & \qedhere
    \end{align*}
\end{proof}

\begin{definition}
    Given a closure model $\Kmodel = (W, G, \cl, V)$, we define $g_N(\Kmodel) = (W, N^{g_N}, V)$ where
    \[
        N^{g_N}(w) = \{ \truthset{\varphi}_{\Kmodel} \mid \varphi \in \cl(G(w)) \}.
    \]
    \label{def:closure-to-neighborhood-function}
\end{definition}

\begin{lemma}
    Suppose that $\Kmodel$ satisfies \textsf{(E)}. Then, for any $w \in \Kmodel$ and $\varphi, \psi \in \MF$, $\truthset{\varphi}_{\Kmodel} = \truthset{\psi}_{\Kmodel}$ implies that $\varphi \in \cl(G(w))$ if and only if $\psi \in \cl(G(w))$.
    \label{lem:truthset-to-closure-by-E}
\end{lemma}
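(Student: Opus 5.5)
The plan is to reduce the hypothesis on truth sets to model-validity of a biconditional and then apply the model-based condition \textsf{(E)} (that is, $\mathsf{(E_{\Kmodel})}$, which is how \textsf{(E)} is to be read in this section) directly.

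First, assume $\truthset{\varphi}_{\Kmodel} = \truthset{\psi}_{\Kmodel}$. I will show that $\Kmodel \models \varphi \leftrightarrow \psi$. Take an arbitrary $v \in \Kmodel$. Then $v \models \varphi$ iff $v \in \truthset{\varphi}_{\Kmodel}$ iff $v \in \truthset{\psi}_{\Kmodel}$ iff $v \models \psi$. By the Boolean satisfaction clauses (with $\leftrightarrow$ defined as the usual abbreviation via $\to$ and $\neg$), this gives $v \models \varphi \leftrightarrow \psi$. Since $v$ was arbitrary, the biconditional is valid in the model.

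Second, I apply \textsf{(E)} in its model-based form $\mathsf{(E_{\Kmodel})}$ at the given world $w$. This yields $\varphi \in \cl(G(w))$ if and only if $\psi \in \cl(G(w))$, which is exactly the claim.

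There is no real obstacle here. The only point needing care is that the argument uses the model-based version of \textsf{(E)}, as stipulated at the start of this section, rather than the frame-based one. Under the frame-based condition, equality of truth sets in a single model would not suffice, because it does not give validity of $\varphi \leftrightarrow \psi$ on the underlying frame under all valuations. I will note this explicitly, since the lemma is presumably used to show that $g_N$ is well defined and truth-preserving. In that application, $\truthset{\psi}_{\Kmodel} \in N^{g_N}(w)$ should force $\psi \in \cl(G(w))$ via some $\varphi \in \cl(G(w))$ with the same truth set.
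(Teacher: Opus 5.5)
Your proposal is correct and follows the paper's proof: equality of truth sets gives $\Kmodel \models \varphi \leftrightarrow \psi$, and the model-based condition \textsf{(E)} then yields the equivalence at $w$. Your remark that the frame-based version would not suffice is also accurate, and it is the reason the paper switches to $\mathsf{(E_{\Kmodel})}$ in this section.
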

\begin{proof}
    If $\truthset{\varphi}_{\Kmodel} = \truthset{\psi}_{\Kmodel}$, then $w \models \varphi$ and $w \models \psi$ are equivalent for all $w \in \Kmodel$. This means that $w \models \varphi \leftrightarrow \psi$ for all $w$. Hence, $\Kmodel \models \varphi \leftrightarrow \psi$. Therefore, by \textsf{(E)}, $\varphi \in \cl(G(w))$ if and only if $\psi \in \cl(G(w))$.
\end{proof}

\begin{lemma}
    Let $\Kmodel$ be a closure model that satisfies \textsf{(E)}. Then, for any $w \in \Kmodel$ and $\varphi \in \MF$, $\Kmodel, w \models \varphi$ if and only if $g_N(\Kmodel), w \models \varphi$.
    \label{lem:closure-to-neighborhood-preserving}
\end{lemma}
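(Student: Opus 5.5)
We argue by induction on the complexity of $\varphi$, following the same pattern as Lemma \ref{lem:neighborhood-to-closure-preserving}. The translation $g_N$ keeps the set of worlds and the valuation $V$ unchanged. So the atomic case is immediate, the case $\top$ is trivial, and the Boolean cases for $\neg$ and $\to$ follow directly from the induction hypothesis. The only substantive case is $\varphi = \Box\psi$.

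For $\varphi = \Box\psi$, we first use the induction hypothesis to identify truth sets. Since $\Kmodel, u \models \psi$ if and only if $g_N(\Kmodel), u \models \psi$ for every world $u$, we have $\truthset{\psi}_{\Kmodel} = \truthset{\psi}_{g_N(\Kmodel)}$. We then establish the chain
\begin{align*}
    \Kmodel, w \models \Box\psi &\iff \psi \in \cl(G(w)) \\
    &\iff \truthset{\psi}_{\Kmodel} \in N^{g_N}(w) \\
    &\iff \truthset{\psi}_{g_N(\Kmodel)} \in N^{g_N}(w) \\
    &\iff g_N(\Kmodel), w \models \Box\psi.
\end{align*}
The first and last equivalences are the satisfaction clauses, and the third uses the identification of truth sets just made.

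The one step that needs an argument, and the only place where \textsf{(E)} enters, is the second equivalence. Its left-to-right direction holds by the definition of $N^{g_N}(w)$. For the right-to-left direction, $\truthset{\psi}_{\Kmodel} \in N^{g_N}(w)$ gives only some $\chi \in \cl(G(w))$ with $\truthset{\chi}_{\Kmodel} = \truthset{\psi}_{\Kmodel}$; it does not give $\psi$ itself. We therefore apply Lemma \ref{lem:truthset-to-closure-by-E}: since $\Kmodel$ satisfies \textsf{(E)} and the two truth sets coincide, $\chi \in \cl(G(w))$ implies $\psi \in \cl(G(w))$. This completes the case, and with it the induction.
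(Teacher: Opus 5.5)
Your proposal is correct and follows essentially the same route as the paper. Both argue by induction and use the induction hypothesis to identify $\truthset{\psi}_{\Kmodel}$ with $\truthset{\psi}_{g_N(\Kmodel)}$. In both, the right-to-left direction of the $\Box\psi$ case takes a witness $\chi \in \cl(G(w))$ with the same truth set and invokes Lemma \ref{lem:truthset-to-closure-by-E}, the only place where \textsf{(E)} is used.
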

\begin{proof}
    By induction on the complexity of $\varphi$.
    We only prove the case $\varphi = \Box \psi$.
    First, we show the left-to-right direction.
    If $\Kmodel, w \models \Box \psi$, then $\psi \in \cl(G(w))$.
    By definition of $N^{g_N}$, $\truthset{\psi}_{\Kmodel} \in N^{g_N}(w)$, which is equivalent to $\truthset{\psi}_{g_N(\Kmodel)} \in N^{g_N}(w)$ by the induction hypothesis. Therefore, we have $g_N(\Kmodel), w \models \Box \psi$.

    Conversely, suppose that $g_N(\Kmodel), w \models \Box \psi$. Then, $\truthset{\psi}_{g_N(\Kmodel)} \in N^{g_N}(w)$, which means $\truthset{\psi}_{\Kmodel} \in N^{g_N}(w)$ by the induction hypothesis. Thus, we have a formula $\chi$ such that $\chi \in \cl(G(w))$ and $\truthset{\psi}_{\Kmodel} = \truthset{\chi}_{\Kmodel}$. By Lemma \ref{lem:truthset-to-closure-by-E}, we have $\psi \in \cl(G(w))$, which means $\Kmodel, w \models \Box \psi$.
\end{proof}

From Lemmas \ref{lem:neighborhood-to-closure-preserving}, \ref{lem:neighborhood-has-E}, and \ref{lem:closure-to-neighborhood-preserving}, we now translate a neighborhood model into an equivalent closure model satisfying \textsf{(E)} and vice versa. Therefore, we get the following theorem.
\begin{theorem}
The translations $f_N: \mathbf{NM} \to \mathbf{CM}_\mathsf{E}$ and $g_N: \mathbf{CM}_\mathsf{E} \to \mathbf{NM}$ are truth-preserving.
\label{thm:neighborhood-closure-representation}
\end{theorem}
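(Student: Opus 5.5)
The theorem packages the three preceding lemmas, so the plan is to make explicit the two things it asserts: each translation lands in the stated target class, and each preserves truth at every world for every modal formula.

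For $f_N$, take an arbitrary neighborhood model $\Kmodel_N = (W, N, V)$. We first check that $f_N(\Kmodel_N) = (W, G^{f_N}, \mathrm{id}, V)$ is a closure model: $W$ is non-empty, $G^{f_N}$ maps worlds to sets of formulas, and $\mathrm{id}$ is an operator on $\powerset(\MF)$. Lemma \ref{lem:neighborhood-has-E} shows that it satisfies the model-based condition \textsf{(E)}, so it belongs to $\mathbf{CM}_\mathsf{E}$. Lemma \ref{lem:neighborhood-to-closure-preserving} then gives $\Kmodel_N, w \models \varphi \iff f_N(\Kmodel_N), w \models \varphi$ for all $w$ and $\varphi$.

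For $g_N$, take $\Kmodel \in \mathbf{CM}_\mathsf{E}$. The map $N^{g_N}(w) = \{\truthset{\varphi}_{\Kmodel} \mid \varphi \in \cl(G(w))\}$ is a function $W \to \powerset(\powerset(W))$, so $g_N(\Kmodel)$ is a neighborhood model and lies in $\mathbf{NM}$. Since $\Kmodel$ satisfies \textsf{(E)}, Lemma \ref{lem:closure-to-neighborhood-preserving} yields truth preservation at every world.

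The only step requiring care is the backward $\Box$-case inside Lemma \ref{lem:closure-to-neighborhood-preserving}, where a neighborhood $\truthset{\psi}_{\Kmodel}$ is witnessed by some other formula $\chi \in \cl(G(w))$. There \textsf{(E)} is indispensable: via Lemma \ref{lem:truthset-to-closure-by-E}, it transfers membership in $\cl(G(w))$ from $\chi$ to $\psi$, because the two formulas have the same truth set. Since that lemma is already established, the theorem follows by combining the facts above.
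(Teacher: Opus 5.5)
Your proposal is correct and follows the paper's own argument. The paper also proves the theorem by combining Lemmas~\ref{lem:neighborhood-to-closure-preserving} and~\ref{lem:neighborhood-has-E} for $f_N$ (truth preservation and membership in $\mathbf{CM}_\mathsf{E}$) with Lemma~\ref{lem:closure-to-neighborhood-preserving} for $g_N$, and there the backward direction of the modal case uses the model-based \textsf{(E)} via Lemma~\ref{lem:truthset-to-closure-by-E}, exactly as you note.
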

\begin{corollary}
    \[
        \logicof(\mathbf{NM}) = \logicof(\mathbf{CM_\mathsf{E}}).
    \]
    \label{cor:neighborhood-closure-equivalence}
\end{corollary}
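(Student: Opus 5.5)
The plan is to prove the two inclusions $\logicof(\mathbf{NM}) \subseteq \logicof(\mathbf{CM}_\mathsf{E})$ and $\logicof(\mathbf{CM}_\mathsf{E}) \subseteq \logicof(\mathbf{NM})$ separately, each by contraposition, using the truth-preserving translations of Theorem \ref{thm:neighborhood-closure-representation}. Recall that $\logicof$ of a class of models is the intersection of the logics of its members, so a formula fails to belong to $\logicof(\mathbf{NM})$ exactly when some neighborhood model $\Kmodel_N$ and some world $w$ satisfy $\Kmodel_N, w \notmodels \varphi$, and similarly for $\mathbf{CM}_\mathsf{E}$. Throughout, \textsf{(E)} is understood as the model-based condition $\mathsf{(E_{\Kmodel})}$, as stipulated at the start of this section.

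For $\logicof(\mathbf{CM}_\mathsf{E}) \subseteq \logicof(\mathbf{NM})$, suppose $\varphi \notin \logicof(\mathbf{NM})$, witnessed by $\Kmodel_N, w \notmodels \varphi$. By Lemma \ref{lem:neighborhood-has-E}, $f_N(\Kmodel_N)$ satisfies \textsf{(E)}, so it lies in $\mathbf{CM}_\mathsf{E}$. Since $f_N$ leaves $W$ unchanged, $w$ is still a world of $f_N(\Kmodel_N)$, and Lemma \ref{lem:neighborhood-to-closure-preserving} gives $f_N(\Kmodel_N), w \notmodels \varphi$. Hence $\varphi \notin \logicof(\mathbf{CM}_\mathsf{E})$.

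For the converse inclusion, suppose $\Kmodel \in \mathbf{CM}_\mathsf{E}$ and $\Kmodel, w \notmodels \varphi$. Then $g_N(\Kmodel)$ is a neighborhood model on the same $W$, and Lemma \ref{lem:closure-to-neighborhood-preserving}, whose hypothesis \textsf{(E)} is exactly membership in $\mathbf{CM}_\mathsf{E}$, yields $g_N(\Kmodel), w \notmodels \varphi$. Thus $\varphi \notin \logicof(\mathbf{NM})$.

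There is no real obstacle here. The one point to check is that the translations do not change the set of worlds, which holds by Definitions \ref{def:neighborhood-to-closure-function} and \ref{def:closure-to-neighborhood-function}, and that \textsf{(E)} is the model-based version, which is exactly what Lemma \ref{lem:neighborhood-has-E} establishes. If desired, one can add that both sides equal $L_\mathsf{E} = \mathbf{E}$, by the preceding proposition together with the known completeness of $\mathbf{E}$ for neighborhood semantics, but the direct argument above does not need this.
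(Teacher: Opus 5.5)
Your proof is correct and follows the paper's route: the paper also obtains the corollary directly from the truth-preserving translations $f_N$ and $g_N$ of Theorem \ref{thm:neighborhood-closure-representation}, not from soundness and completeness, with Lemma \ref{lem:neighborhood-has-E} ensuring that $f_N$ lands in $\mathbf{CM}_\mathsf{E}$. You were right to read \textsf{(E)} as the model-based condition $\mathsf{(E_{\Kmodel})}$, since that is exactly what the $g_N$ direction requires.
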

Although this corollary also follows from soundness and completeness, it is obtained here directly from the truth-preserving translations.

\subsection{Closure Models and FMT Models}
\label{subsec:closure-and-FMT}

We next consider the relational semantics introduced by Fitting, Marek, and Truszczy\'{n}ski \cite{fitting1992} for the logic $\mathbf{N}$.
Unlike ordinary Kripke semantics, an FMT model assigns a binary relation $R_\varphi$ to each modal formula $\varphi$, and the truth of $\Box\varphi$ is determined by the $R_\varphi$-accessible worlds.
In this subsection, we compare FMT models with closure models satisfying \textsf{(G)} by constructing truth-preserving translations in both directions.

\begin{definition}
    An \emph{FMT frame} is a tuple $\Kframe_F = (W, \{R_\varphi\}_{\varphi \in \MF})$ where $W$ is a non-empty set and $R_\varphi \subseteq W \times W$ is a binary relation for each $\varphi \in \MF$.
    
    An \emph{FMT model} is a tuple $\Kmodel_F = (W, \{R_\varphi\}_{\varphi \in \MF}, V)$ where $(W, \{R_\varphi\}_{\varphi \in \MF})$ is an FMT frame and $V$ is a valuation function $V: \Prop \to \powerset(W)$.
    \label{def:FMT model}
\end{definition}

\begin{definition}
    The relation $\Kmodel_F, w \models \varphi$ is defined as for closure models, except for the modal operator, which is defined as follows:
    \[
        \Kmodel_F, w \models \Box \varphi
        \iff
        \text{for every } v \in W,\ w R_\varphi v \text{ implies } \Kmodel_F, v \models \varphi.
    \]
    \label{def:FMT-satisfaction}
\end{definition}
As shown in \cite{fitting1992} (see also \cite{kurahashi2026N} for a recent presentation), the logic $\mathbf{N} = L_{\mathsf{G}}$ is sound and complete with respect to $\mathbf{FM}$, which denotes the class of all FMT models.

\begin{definition}
    Given an FMT model $\Kmodel_F = (W, \{R_\varphi\}_{\varphi \in \MF}, V)$, we define $f_F(\Kmodel_F) = (W, G^{f_F}, \mathrm{id}, V)$ where
    \[
        G^{f_F}(w)
        =
        \{ \varphi \in \MF \mid
        R_\varphi[w]
        \subseteq \truthset{\varphi}_{\Kmodel_F} \}.
    \]
    Here, $R_\varphi[w]$ denotes the set $\{v \in W \mid w R_\varphi v\}$.
    \label{def:FMT-to-closure-function}
\end{definition}

Equivalently,
\[
    \varphi \in G^{f_F}(w) \iff (w R_\varphi v \implies \Kmodel_F, v \models \varphi) \text{ for every } v \in W.  
\]

\begin{lemma}
    Let $\Kmodel_F$ be an FMT model. Then, for any $w \in \Kmodel_F$ and $\varphi \in \MF$, $\Kmodel_F, w \models \varphi$ if and only if $f_F(\Kmodel_F), w \models \varphi$.
    \label{lem:FMT-to-closure-preserving}
\end{lemma}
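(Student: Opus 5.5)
The plan is to argue by induction on the complexity of $\varphi$, in the same way as Lemma \ref{lem:neighborhood-to-closure-preserving}. The translation $f_F$ keeps the set of worlds $W$ and the valuation $V$ unchanged, so the atomic case and the case of $\top$ hold immediately. The Boolean cases for $\neg$ and $\to$ follow directly from the induction hypothesis, because both semantics evaluate these connectives by the same clauses at the same world.

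The only case with content is $\varphi = \Box \psi$, and here I would compute a chain of equivalences:
\begin{align*}
    \Kmodel_F, w \models \Box \psi &\iff \text{for every } v \in W,\ w R_\psi v \text{ implies } \Kmodel_F, v \models \psi \\
    &\iff R_\psi[w] \subseteq \truthset{\psi}_{\Kmodel_F} \\
    &\iff \psi \in G^{f_F}(w) \\
    &\iff \psi \in \mathrm{id}(G^{f_F}(w)) \\
    &\iff f_F(\Kmodel_F), w \models \Box \psi.
\end{align*}
Each step justifies itself as follows. The first is Definition \ref{def:FMT-satisfaction}. The second only rewrites the quantifier using the notation $R_\psi[w]$ and the truth set. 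The third is Definition \ref{def:FMT-to-closure-function}. The fourth holds because the closure is the identity. The last is the satisfaction clause for $\Box$ in closure models.

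I expect no step to be a real obstacle, because the generator $G^{f_F}(w)$ is defined in terms of the FMT model $\Kmodel_F$ itself and not in terms of the translated model. As a result, the $\Box$ case does not even call on the induction hypothesis for $\psi$; the equivalence is definitional. This differs from the reverse translation, where the truth set would have to be transported between the two models. The only point that needs care is to keep the truth set $\truthset{\psi}_{\Kmodel_F}$ explicitly indexed by the FMT model, so that no circularity arises.
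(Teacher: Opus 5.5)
Your proposal is correct and takes the same approach as the paper: induction on $\varphi$, with the atomic and Boolean cases immediate and the $\Box\psi$ case handled by the same chain of definitional equivalences through $G^{f_F}(w)$ and the identity closure. You are also right that the $\Box\psi$ case needs no appeal to the induction hypothesis, since $G^{f_F}$ is defined through truth in $\Kmodel_F$ itself.
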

\begin{proof}
    By induction on the complexity of $\varphi$.
    The atomic case and the Boolean cases are straightforward, so we consider only the modal case $\varphi = \Box \psi$:
    \begin{align*}
        \Kmodel_F, w \models \Box \psi
        &\iff  (w R_\psi v \implies \Kmodel_F, v \models \psi) \text{ for all } v \in W \\
        &\iff \psi \in G^{f_F}(w) \\
        &\iff \psi \in \mathrm{id}(G^{f_F}(w)) \\
        &\iff f_F(\Kmodel_F), w \models \Box \psi. \qedhere
    \end{align*}
\end{proof}

\begin{lemma}
    For any FMT model $\Kmodel_F$, the model $f_F(\Kmodel_F)$ has the property \textsf{(G)}.
    \label{lem:FMT-has-G}
\end{lemma}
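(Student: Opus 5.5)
We need to show that $f_F(\Kmodel_F)$ satisfies the model-based condition \textsf{(G)}, which is how the paper uses \textsf{(G)} in this section. That is: if $f_F(\Kmodel_F) \models \varphi$, then $\varphi \in \mathrm{id}(G^{f_F}(w)) = G^{f_F}(w)$ for every $w \in W$. The plan is to transfer validity back to the FMT model via Lemma \ref{lem:FMT-to-closure-preserving}, and then read off membership in the generator directly from Definition \ref{def:FMT-to-closure-function}.

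First, assume $f_F(\Kmodel_F) \models \varphi$, so that $f_F(\Kmodel_F), v \models \varphi$ for all $v \in W$. Both models share the same set of worlds $W$. Hence Lemma \ref{lem:FMT-to-closure-preserving} gives $\Kmodel_F, v \models \varphi$ for all $v \in W$, that is, $\truthset{\varphi}_{\Kmodel_F} = W$.

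Next, fix any $w \in W$. Then trivially $R_\varphi[w] \subseteq W = \truthset{\varphi}_{\Kmodel_F}$. By the definition of $G^{f_F}$ this gives $\varphi \in G^{f_F}(w)$, and therefore $\varphi \in \mathrm{id}(G^{f_F}(w))$, as required.

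There is no real obstacle here. The only point needing care is that the validity hypothesis is stated in the translated closure model, while membership in $G^{f_F}(w)$ is defined using truth in the original FMT model. Lemma \ref{lem:FMT-to-closure-preserving} is exactly what bridges the two. It is also worth noting that we do not need the frame-level version of \textsf{(G)}, since the section has adopted the model-based condition.
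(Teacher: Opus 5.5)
Your proposal is correct and matches the paper's proof essentially step for step: you transfer validity back to $\Kmodel_F$ via Lemma~\ref{lem:FMT-to-closure-preserving}, obtain $\truthset{\varphi}_{\Kmodel_F}=W$, and conclude from $R_\varphi[w]\subseteq W$ that $\varphi\in G^{f_F}(w)=\mathrm{id}(G^{f_F}(w))$. You are also right to use the model-based version of \textsf{(G)}, since that is the convention the paper adopts in this section.
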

\begin{proof}
    Suppose that $f_F(\Kmodel_F) \models \varphi$. By Lemma \ref{lem:FMT-to-closure-preserving}, $\Kmodel_F \models \varphi$, and hence $\truthset{\varphi}_{\Kmodel_F}=W$. Then, for every $w \in W$,
    \[
        \{v \in W \mid wR_\varphi v\}
        \subseteq
        \truthset{\varphi}_{\Kmodel_F}.
    \]
    Thus, $\varphi \in G^{f_F}(w)=\mathrm{id}(G^{f_F}(w))$ for every $w \in W$. Therefore, $f_F(\Kmodel_F)$ has the property \textsf{(G)}.
\end{proof}

Next, we provide the translation in the opposite direction.
The idea behind this definition of $R_\varphi$ is to preserve the truth value of $\Box\varphi$.
If $\varphi \notin \cl(G(w))$, and hence $\Box\varphi$ is false at $w$ in the original closure model, we define $R_\varphi$ so that there is an $R_\varphi$-accessible world at which $\varphi$ is false, thereby providing a witness to the failure of $\Box\varphi$ in the translated FMT model.
On the other hand, if $\varphi \in \cl(G(w))$, no such witness is introduced, so that $\Box\varphi$ remains true after the translation.

\begin{definition}
    Given a closure model $\Kmodel = (W, G, \cl, V)$, we define $g_F(\Kmodel) = (W, \{R^{g_F}_\varphi\}_{\varphi \in \MF}, V)$ where:
    \[
    R_\varphi^{g_F}[w] =
        \begin{cases}
            W \setminus \truthset{\varphi}_{\Kmodel} &\text{if } \varphi \notin \cl(G(w)) \\
            \emptyset &\text{otherwise.}
        \end{cases}
    \]
    \label{def:closure-to-FMT-function}
\end{definition}
Equivalently,
\[
    w R^{g_F}_\varphi v \iff \varphi \notin \cl(G(w)) \text{ and } \Kmodel, v \not\models \varphi.
\]

\begin{lemma}
    Let $\Kmodel$ be a closure model that satisfies \textsf{(G)}. Then, for any $w \in \Kmodel$ and $\varphi \in \MF$, $\Kmodel, w \models \varphi$ if and only if $g_F(\Kmodel), w \models \varphi$.
    \label{lem:closure-to-FMT-preserving}
\end{lemma}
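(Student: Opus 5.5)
We argue by induction on the complexity of $\varphi$. The atomic case holds because $g_F$ keeps the valuation $V$, and the Boolean cases follow directly from the induction hypothesis. So the only real work is the modal case $\varphi = \Box\psi$, where we may assume as induction hypothesis that $\truthset{\psi}_{\Kmodel} = \truthset{\psi}_{g_F(\Kmodel)}$.

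Fix $w \in W$ and split according to whether $\psi \in \cl(G(w))$.

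If $\psi \in \cl(G(w))$, then $\Kmodel, w \models \Box\psi$. By Definition \ref{def:closure-to-FMT-function}, $R^{g_F}_\psi[w] = \emptyset$. So the universal condition in Definition \ref{def:FMT-satisfaction} holds vacuously, and $g_F(\Kmodel), w \models \Box\psi$.

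If $\psi \notin \cl(G(w))$, then $\Kmodel, w \notmodels \Box\psi$, and $R^{g_F}_\psi[w] = W \setminus \truthset{\psi}_{\Kmodel}$. By the induction hypothesis this set equals $W \setminus \truthset{\psi}_{g_F(\Kmodel)}$. Hence $g_F(\Kmodel), w \models \Box\psi$ holds if and only if this set is empty, that is, if and only if $\Kmodel \models \psi$.

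The main obstacle is this last subcase, and it is exactly where \textsf{(G)} is used. Suppose we had $\Kmodel \models \psi$. Then the model-based condition \textsf{(G)} would give $\psi \in \cl(G(w))$, contradicting the case assumption. So there is some $v \in W$ with $\Kmodel, v \notmodels \psi$. By the induction hypothesis also $g_F(\Kmodel), v \notmodels \psi$, and by definition $w R^{g_F}_\psi v$. This $v$ witnesses $g_F(\Kmodel), w \notmodels \Box\psi$, which matches $\Kmodel, w \notmodels \Box\psi$.

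The two subcases together give the equivalence for $\Box\psi$, which completes the induction. Without \textsf{(G)} the argument would fail: a valid $\psi$ outside $\cl(G(w))$ would produce an empty witness set, making $\Box\psi$ true in $g_F(\Kmodel)$ but false in $\Kmodel$.
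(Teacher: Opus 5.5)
Your proof is correct and follows the paper's argument essentially step for step. Both use induction on formulas and split the modal case on whether $\psi \in \cl(G(w))$. In the positive case the empty set $R^{g_F}_\psi[w]$ makes $\Box\psi$ vacuously true. In the negative case the contrapositive of the model-based \textsf{(G)} yields a $v$ with $w R^{g_F}_\psi v$ at which $\psi$ fails, by the induction hypothesis.
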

\begin{proof}
    By induction on the complexity of $\varphi$.
    The base case and the Boolean cases are straightforward.
    We only prove the case $\varphi = \Box \psi$.

    Suppose first that $\Kmodel, w \models \Box \psi$. Then $\psi \in \cl(G(w))$. By the definition of $R^{g_F}_\psi$, there is no $v \in W$ such that $wR^{g_F}_\psi v$. Hence, $g_F(\Kmodel), w \models \Box \psi$.

    Conversely, suppose that $\Kmodel, w \notmodels \Box \psi$. Then, $\psi \notin \cl(G(w))$. By the contrapositive of \textsf{(G)}, there is some $v \in W$ such that $\Kmodel, v \notmodels \psi$. By the definition of $R^{g_F}_\psi$, we have $wR^{g_F}_\psi v$. Moreover, by the induction hypothesis, $\Kmodel, v \notmodels \psi$ implies $g_F(\Kmodel), v \notmodels \psi$. Therefore, $g_F(\Kmodel), w \notmodels \Box \psi$.
\end{proof}

Lemmas \ref{lem:FMT-to-closure-preserving}, \ref{lem:FMT-has-G}, and \ref{lem:closure-to-FMT-preserving} yield the following theorem.
\begin{theorem}
The translations $f_F: \mathbf{FM} \to \mathbf{CM}_\mathsf{G}$ and $g_F: \mathbf{CM}_\mathsf{G} \to \mathbf{FM}$ are truth-preserving.
\label{thm:FMT-closure-representation}
\end{theorem}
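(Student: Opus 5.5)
The theorem makes two claims, and the plan is to assemble them from the three preceding lemmas, exactly as was done for Theorem \ref{thm:neighborhood-closure-representation}. First, for any FMT model $\Kmodel_F$, the translation $f_F(\Kmodel_F)$ has to land in $\mathbf{CM}_\mathsf{G}$ and agree with $\Kmodel_F$ on every formula at every world. Second, for any closure model $\Kmodel \in \mathbf{CM}_\mathsf{G}$, the translation $g_F(\Kmodel)$ has to be an FMT model and agree with $\Kmodel$ on every formula at every world.

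For the direction $f_F$, note first that $f_F(\Kmodel_F) = (W, G^{f_F}, \mathrm{id}, V)$ is a closure model: $W$ is nonempty and $G^{f_F}$ and $\mathrm{id}$ have the right types. Lemma \ref{lem:FMT-has-G} shows that $f_F(\Kmodel_F)$ satisfies the model-based condition \textsf{(G)}, which is the sense of \textsf{(G)} adopted in this section. Hence $f_F(\Kmodel_F) \in \mathbf{CM}_\mathsf{G}$. Truth preservation is then exactly Lemma \ref{lem:FMT-to-closure-preserving}.

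For the direction $g_F$, note that $g_F(\Kmodel)$ is an FMT model: each $R^{g_F}_\varphi$ is well defined as a binary relation on $W$ through its section description $R^{g_F}_\varphi[w]$, and $V$ is unchanged. Since $\Kmodel$ satisfies \textsf{(G)}, Lemma \ref{lem:closure-to-FMT-preserving} gives the equivalence $\Kmodel, w \models \varphi \iff g_F(\Kmodel), w \models \varphi$ for all $w$ and $\varphi$.

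No step is a real obstacle, since the substantive work is already done in the lemmas. The one point to check is that the class $\mathbf{CM}_\mathsf{G}$ is read with the model-based condition $(\mathsf{G}_{\Kmodel})$. Lemma \ref{lem:FMT-has-G} establishes exactly that condition, and Lemma \ref{lem:closure-to-FMT-preserving} uses only that condition, in the form: if $\psi \notin \cl(G(w))$ then some world falsifies $\psi$ in $\Kmodel$. So the two halves use the same notion of \textsf{(G)} and fit together. As a consequence, just as in Corollary \ref{cor:neighborhood-closure-equivalence}, one also gets $\logicof(\mathbf{FM}) = \logicof(\mathbf{CM}_\mathsf{G})$.
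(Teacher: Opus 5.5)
Your proposal is correct and follows the paper's own route: the theorem is simply Lemmas \ref{lem:FMT-to-closure-preserving}, \ref{lem:FMT-has-G}, and \ref{lem:closure-to-FMT-preserving} taken together. Your check that both halves use the model-based condition $(\mathsf{G}_{\Kmodel})$ agrees with the convention the paper adopts for this section.
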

\begin{corollary}
    \[
        \logicof(\mathbf{FM}) = \logicof(\mathbf{CM_\mathsf{G}}).
    \]
    \label{cor:FMT-closure-equivalence}
\end{corollary}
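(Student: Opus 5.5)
We prove the two inclusions separately, using only Theorem \ref{thm:FMT-closure-representation}, exactly as Corollary \ref{cor:neighborhood-closure-equivalence} follows from Theorem \ref{thm:neighborhood-closure-representation}. Here $\logicof(\mathbf{FM})$ is understood as the intersection of $\logicof(\Kmodel_F)$ over all FMT models $\Kmodel_F$, with $\logicof(\Kmodel_F)$ the set of formulas true at every world of $\Kmodel_F$. Similarly, $\logicof(\mathbf{CM}_\mathsf{G})$ is the intersection over closure models satisfying the model-based condition \textsf{(G)}.

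\emph{Inclusion $\logicof(\mathbf{CM}_\mathsf{G}) \subseteq \logicof(\mathbf{FM})$.} Let $\varphi \in \logicof(\mathbf{CM}_\mathsf{G})$ and let $\Kmodel_F$ be any FMT model with a world $w$. By Lemma \ref{lem:FMT-has-G}, $f_F(\Kmodel_F)$ belongs to $\mathbf{CM}_\mathsf{G}$, so $f_F(\Kmodel_F), w \models \varphi$. By Lemma \ref{lem:FMT-to-closure-preserving}, this gives $\Kmodel_F, w \models \varphi$. Since $\Kmodel_F$ and $w$ were arbitrary, $\varphi \in \logicof(\mathbf{FM})$.

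\emph{Inclusion $\logicof(\mathbf{FM}) \subseteq \logicof(\mathbf{CM}_\mathsf{G})$.} Let $\varphi \in \logicof(\mathbf{FM})$ and let $\Kmodel \in \mathbf{CM}_\mathsf{G}$ with a world $w$. Then $g_F(\Kmodel)$ is an FMT model. This holds with no condition, since Definition \ref{def:closure-to-FMT-function} produces a relation $R^{g_F}_\psi$ for every formula $\psi$ on the same set $W$ with the same valuation. Hence $g_F(\Kmodel), w \models \varphi$, and Lemma \ref{lem:closure-to-FMT-preserving}, which applies because $\Kmodel$ satisfies \textsf{(G)}, transfers this back to $\Kmodel, w \models \varphi$.

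There is no real obstacle here. The only point needing care is that both translations keep the set of worlds $W$ unchanged. This makes the pointwise preservation of truth sufficient, with no need to match worlds across models. It also matters that the relevant \textsf{(G)} is the model-based version $(\mathsf{G}_{\Kmodel})$, which is the one verified in Lemma \ref{lem:FMT-has-G} and assumed in Lemma \ref{lem:closure-to-FMT-preserving}. As a consistency check, both sides also equal $\mathbf{N}$: FMT completeness gives this for $\logicof(\mathbf{FM})$, and the proposition on model-based conditions gives it for $\logicof(\mathbf{CM}_\mathsf{G})$. The translation argument, however, gives the equality directly without appealing to completeness.
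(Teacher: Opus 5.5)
Your proposal is correct and follows essentially the paper's route: the corollary is read off directly from Theorem~\ref{thm:FMT-closure-representation}, with Lemma~\ref{lem:FMT-has-G} placing $f_F(\Kmodel_F)$ in $\mathbf{CM}_\mathsf{G}$ and the model-based condition \textsf{(G)} licensing Lemma~\ref{lem:closure-to-FMT-preserving} for $g_F$. Both translations keep the same set of worlds, so pointwise truth preservation gives the two inclusions, as you argue.
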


\subsection{Closure Models and Kripke Models}
\label{subsec:closure-and-Kripke}

We next compare closure models with ordinary Kripke models.
As we will see, the properties \textsf{(G)} and \textsf{(K)} alone do not suffice to obtain a direct representation of every closure model by a Kripke model.
We therefore introduce an additional condition expressing Kripke representability.

We begin by briefly recalling the standard Kripke semantics.
\begin{definition}
    A \emph{Kripke frame} is a tuple $\Kframe_K= (W, R)$ where $W$ is a non-empty set and $R \subseteq W \times W$. A \emph{Kripke model} is a tuple $\Kmodel_K = (W, R, V)$ where $(W, R)$ is a Kripke frame and $V$ is a valuation function $V: \Prop \to \powerset(W)$.
    The relation $\Kmodel_K, w \models \varphi$ is defined in the usual way, and the modal operator is interpreted as follows:
    \[
        \Kmodel_K, w \models \Box \varphi
        \iff
        \text{for all } v \in W, \ w R v\text{ implies } \Kmodel_K, v \models \varphi.
    \]
    \label{def:Kripke-model}
\end{definition}

It is well known (see \cite{blackburn2001, chellas1980, hughes1996}) that the logic $\mathbf{K} = L_\mathsf{GK}$ is sound and complete with respect to $\mathbf{KM}$, the class of all Kripke models.

\subsubsection{Kripke Representability}

\begin{definition}
    We say that a closure model $\Kmodel=(W, G, \cl, V)$ has the property \textsf{(KRep)} if, for every $w\in W$ and $\varphi \in \MF$,
    \[
        \varphi\in\cl(G(w))
        \iff
        \bigcap_{\psi\in\cl(G(w))}\truthset{\psi}_{\Kmodel}
        \subseteq
        \truthset{\varphi}_{\Kmodel}.
    \]
    \label{def:KRep}
\end{definition}
If a closure model satisfies \textsf{(KRep)}, we call it \emph{Kripke-representable}. We write $\mathbf{CM}_\mathsf{KRep}$ to mean the class of Kripke-representable closure models.

The left-to-right direction in \textsf{(KRep)} follows immediately from the definition of intersection. Thus, the substantial part of \textsf{(KRep)} is the converse direction. Equivalently, \textsf{(KRep)} can be written in the following separation form:
\begin{align*}
    \varphi\notin\cl(G(w))
    \implies
    &\text{there is some }v\in W\text{ such that } \\
    &\Kmodel, v \models \psi \text{ for every }\psi\in\cl(G(w)) \text{ and }\Kmodel,v\not\models\varphi.
\end{align*}
Thus, \textsf{(KRep)} says that every formula outside $\cl(G(w))$ can be separated from $\cl(G(w))$ by a world of the model.

There is also a useful semantic characterization of \textsf{(KRep)}.
For a closure model $\Kmodel$, define the model-relative consequence operator $C_{\Kmodel}$ by
\[
    C_{\Kmodel}(\Gamma) = \{ \varphi \in \MF \mid \Kmodel, v \models \Gamma \implies \Kmodel, v \models \varphi \text{ for every } v \in W \}.
\]
Then, \textsf{(KRep)} is equivalent to $\cl(G(w)) = C_{\Kmodel}(\cl(G(w)))$ for every $w\in W$. 
This characterization is closely aligned with ordinary Kripke semantics.
Let $\Kmodel_K = (W, R, V)$ be a Kripke model, and define
\[
    H(w) = \{ \varphi \mid R[w] \subseteq \truthset{\varphi}_{\Kmodel_K} \}.
\]
Thus, $H(w)$ is the set of formulas true at every world accessible from $w$.
If $C_{\Kmodel_K}$ is defined for $\Kmodel_K$ in the same way as above, then $H(w) = C_{\Kmodel_K}(H(w))$.
Indeed, every world in $R[w]$ satisfies all formulas in $H(w)$, so every semantic consequence of $H(w)$ in $\Kmodel_K$ is also true throughout $R[w]$ and hence belongs to $H(w)$.

The condition \textsf{(KRep)} is strictly stronger than the combination of \textsf{(G)} and \textsf{(K)}.
We first show that every Kripke-representable closure model satisfies both \textsf{(G)} and \textsf{(K)}, and then give a counterexample to the converse.
\begin{lemma}
    Every closure model satisfying \textsf{(KRep)} satisfies both \textsf{(G)} and \textsf{(K)}.
    \label{lem:KRep-implies-GK}
\end{lemma}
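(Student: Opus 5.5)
We argue directly from the separation form of \textsf{(KRep)}, or equivalently from its right-to-left direction, writing $S_w = \bigcap_{\psi\in\cl(G(w))}\truthset{\psi}_{\Kmodel}$ for $w \in W$. Each of \textsf{(G)} and \textsf{(K)} (in its model-based form $(\mathsf{G}_{\Kmodel})$, as fixed at the start of this section) will follow once we show that the relevant formula's truth set contains $S_w$.

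For \textsf{(G)}, suppose $\Kmodel \models \varphi$, so that $\truthset{\varphi}_{\Kmodel} = W$. Then trivially $S_w \subseteq W = \truthset{\varphi}_{\Kmodel}$ for every $w \in W$, and the right-to-left direction of \textsf{(KRep)} yields $\varphi \in \cl(G(w))$. This covers the degenerate case $\cl(G(w)) = \emptyset$ as well, where the empty intersection is read as $W$.

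For \textsf{(K)}, suppose $\varphi, \varphi \to \psi \in \cl(G(w))$. Take any $v \in S_w$. Since both formulas lie in $\cl(G(w))$, we have $\Kmodel, v \models \varphi$ and $\Kmodel, v \models \varphi \to \psi$. The satisfaction clause for $\to$ then gives $\Kmodel, v \models \psi$, so $S_w \subseteq \truthset{\psi}_{\Kmodel}$. Applying \textsf{(KRep)} again gives $\psi \in \cl(G(w))$.

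No step is a genuine obstacle. The only points requiring care are the convention that the empty intersection equals $W$, and the remark that \textsf{(G)} here is the model-based condition; the frame-based version would not follow, since \textsf{(KRep)} is itself a model-relative property.
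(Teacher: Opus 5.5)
Your proof is correct and matches the paper's argument: for \textsf{(G)} you use $S_w \subseteq W = \truthset{\varphi}_{\Kmodel}$. For \textsf{(K)} you verify pointwise the inclusion $S_w \subseteq \truthset{\varphi\to\psi}_{\Kmodel}\cap\truthset{\varphi}_{\Kmodel} \subseteq \truthset{\psi}_{\Kmodel}$ that the paper states directly, and in each case you conclude by the right-to-left direction of \textsf{(KRep)}. One correction to your closing aside, which does not affect the proof: the frame-based \textsf{(G)} does follow as well, since $\Kframe \models \varphi$ implies $(\Kframe, V) \models \varphi$, so the model-based condition you establish is the stronger one and entails the frame-based condition for the underlying frame.
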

\begin{proof}
    Suppose first that $\Kmodel\models\varphi$. Then $\truthset{\varphi}_{\Kmodel}=W$. Hence, for every $w\in W$,
    \[
        \bigcap_{\psi\in\cl(G(w))}\truthset{\psi}_{\Kmodel}
        \subseteq
        W
        =
        \truthset{\varphi}_{\Kmodel}.
    \]
    By \textsf{(KRep)}, $\varphi\in\cl(G(w))$. Therefore, $\Kmodel$ satisfies \textsf{(G)}.

    Next, suppose that $\varphi\to\psi\in\cl(G(w))$ and $\varphi\in\cl(G(w))$. Then
    \[
        \bigcap_{\chi\in\cl(G(w))}\truthset{\chi}_{\Kmodel}
        \subseteq
        \truthset{\varphi\to\psi}_{\Kmodel}
        \cap
        \truthset{\varphi}_{\Kmodel}
        \subseteq
        \truthset{\psi}_{\Kmodel}.
    \]
    By \textsf{(KRep)}, $\psi\in\cl(G(w))$. Therefore, $\Kmodel$ satisfies \textsf{(K)}.
\end{proof}

The converse of Lemma \ref{lem:KRep-implies-GK} does not hold. The following example shows that the combination of \textsf{(G)} and \textsf{(K)} does not imply \textsf{(KRep)}.

\begin{example}
    Let $\Kmodel = (W, G, \cl, V)$ be a closure model where:
    \begin{align*}
        W &= \mathbb{N} \\
        G(w) &= \{ \varphi \mid W \setminus \truthset{\varphi}_{\Kmodel} \text{ is finite} \} \\
        \cl &= \mathrm{id} \\
        V(p_n) &= W \setminus \{n\} \text{ for all } n \in \mathbb{N},
        V(q) = \{ 2n \mid n \in \mathbb{N} \}.
    \end{align*}
    Note that this definition is well-founded, although $G$ is defined in terms of truth sets in $\Kmodel$.
    Determining whether $\varphi \in G(w)$ requires only the truth set of $\varphi$, while this membership is used only to determine the truth value of the strictly more complex formula $\Box \varphi$.
    
    This model satisfies $\textsf{(G)}$.
    If $\Kmodel \models \varphi$, we have $\truthset{\varphi}_{\Kmodel} = W$, which is equivalent to $W \setminus \truthset{\varphi}_{\Kmodel} = \emptyset$.
    Therefore, $\varphi \in \cl(G(w))$.
    It also satisfies $\textsf{(K)}$. Suppose that $\varphi, \varphi \to \psi \in \cl(G(w))$. Then both $\truthset{\varphi\to\psi}_{\Kmodel}$ and $\truthset{\varphi}_{\Kmodel}$ have finite complements.
    Since $\truthset{\varphi\to\psi}_{\Kmodel} \cap \truthset{\varphi}_{\Kmodel} \subseteq \truthset{\psi}_{\Kmodel}$, the set $\truthset{\psi}_{\Kmodel}$ also has a finite complement. Therefore, $\psi \in \cl(G(w))$.

    On the other hand, $p_n \in \cl(G(w))$ for every $n \in \mathbb{N}$, since $V(p_n)=W\setminus\{n\}$ is cofinite. Therefore,
    \[
        \bigcap_{\varphi\in \cl(G(w))}\truthset{\varphi}_{\Kmodel}
        \subseteq
        \bigcap_{n\in\mathbb{N}}\truthset{p_n}_{\Kmodel}
        =
        \emptyset.
    \]
    Since $\truthset{q}_{\Kmodel} = \{ 2n \mid n \in \mathbb{N} \}$, we have
    \[
        \bigcap_{\varphi \in \cl(G(w))} \truthset{\varphi}_{\Kmodel}
        \subseteq
        \truthset{q}_{\Kmodel}.
    \]
    However, $q \notin G(w)$ because $W \setminus \truthset{q}_{\Kmodel}$ is not finite. Therefore, the model does not satisfy \textsf{(KRep)}.
    \label{ex:GK-not-KRep}
\end{example}

\subsubsection{Representations between Kripke Models and Kripke-Representable Closure Models}

In this subsection, we construct truth-preserving translations between Kripke models and Kripke-representable closure models.
These translations show that the valid formulas in all Kripke models coincide with those valid in all Kripke-representable closure models.
Together with the completeness results established in Section \ref{sec:logic}, this further shows that they coincide with the formulas valid in all closure models satisfying \textsf{(G)} and \textsf{(K)}.

\begin{definition}
    Given a Kripke model $\Kmodel_K = (W, R, V)$, we define $f_K(\Kmodel_K) = (W, G^{f_K}, \mathrm{id}, V)$ where
    \[
        G^{f_K}(w)
        =
        \{\varphi\in\MF\mid
        R[w]
        \subseteq
        \truthset{\varphi}_{\Kmodel_K}\}.
    \]
    \label{def:Kripke-to-closure-function}
\end{definition}
As in the case for $f_F$, we observe that $\varphi \in G^{f_K}(w)$ if and only if, for all $v \in W$, $w R v$ implies $\Kmodel_K, v \models \varphi$.

\begin{lemma}
    Let $\Kmodel_K$ be a Kripke model. Then, for any $w\in\Kmodel_K$ and $\varphi\in\MF$, $\Kmodel_K,w\models\varphi$ if and only if $f_K(\Kmodel_K),w\models\varphi$.
    \label{lem:Kripke-to-closure-preserving}
\end{lemma}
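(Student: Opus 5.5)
The plan is to argue by induction on the complexity of $\varphi$, following the proofs of Lemmas \ref{lem:neighborhood-to-closure-preserving} and \ref{lem:FMT-to-closure-preserving}. The base case holds because $f_K$ keeps the set of worlds $W$ and the valuation $V$ unchanged. The cases for $\top$, $\neg$ and $\to$ follow immediately from the induction hypothesis, since the two satisfaction relations are defined by the same Boolean clauses. So the only real work is the modal case $\varphi = \Box\psi$.

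For $\varphi = \Box\psi$ I will establish the following chain of equivalences:
\begin{align*}
    \Kmodel_K, w \models \Box\psi
    &\iff \text{for all } v \in W,\ wRv \text{ implies } \Kmodel_K, v \models \psi \\
    &\iff R[w] \subseteq \truthset{\psi}_{\Kmodel_K} \\
    &\iff \psi \in G^{f_K}(w) \\
    &\iff \psi \in \mathrm{id}(G^{f_K}(w)) \\
    &\iff f_K(\Kmodel_K), w \models \Box\psi.
\end{align*}
The first step is the Kripke clause of Definition \ref{def:Kripke-model}. The second is the definition of $R[w]$ and of the truth set. The third is Definition \ref{def:Kripke-to-closure-function}. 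The fourth holds because the closure of $f_K(\Kmodel_K)$ is the identity. The last is the closure-model clause for $\Box$.

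There is no real obstacle here. The only point to watch is that $G^{f_K}$ is defined through truth sets in the original Kripke model $\Kmodel_K$, not in $f_K(\Kmodel_K)$. As a result, the modal step does not even use the induction hypothesis for $\psi$; the hypothesis is needed only for the Boolean cases. For the same reason, this direction requires no condition on the model, unlike the converse translation, which will need \textsf{(KRep)}.
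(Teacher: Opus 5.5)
Your proposal is correct and follows the paper's proof essentially verbatim: induction on $\varphi$, with the modal case handled by the same chain of equivalences through $\psi \in G^{f_K}(w) = \mathrm{id}(G^{f_K}(w))$. You also correctly note that this modal step does not need the induction hypothesis, because $G^{f_K}$ is defined from truth sets in $\Kmodel_K$ itself.
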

\begin{proof}
    By induction on the complexity of $\varphi$. The base case and the Boolean cases are straightforward. We only prove the case $\varphi=\Box\psi$ as follows:
    \begin{align*}
        \Kmodel_K,w\models\Box\psi
        &\iff (w R v \implies \Kmodel_K, v \models \psi) \text{ for all } v \in \Kmodel_K \\
        &\iff \psi\in G^{f_K}(w) \\
        &\iff \psi\in\mathrm{id}(G^{f_K}(w)) \\
        &\iff f_K(\Kmodel_K),w\models\Box\psi. \qedhere
    \end{align*}
\end{proof}

\begin{lemma}
    For any Kripke model $\Kmodel_K$, the closure model $f_K(\Kmodel_K)$ has the property \textsf{(KRep)}.
    \label{lem:Kripke-has-KRep}
\end{lemma}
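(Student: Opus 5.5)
The plan is to verify the two directions of \textsf{(KRep)} directly for $\Kmodel = f_K(\Kmodel_K)$, writing $\cl(G(w)) = \mathrm{id}(G^{f_K}(w)) = G^{f_K}(w)$. We fix $w \in W$ and $\varphi \in \MF$.

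\textbf{Step 1: transfer truth sets.} By Lemma \ref{lem:Kripke-to-closure-preserving}, $\truthset{\chi}_{\Kmodel} = \truthset{\chi}_{\Kmodel_K}$ for every formula $\chi$. So the condition to be checked becomes
\[
    \varphi \in G^{f_K}(w) \iff \bigcap_{\psi \in G^{f_K}(w)} \truthset{\psi}_{\Kmodel_K} \subseteq \truthset{\varphi}_{\Kmodel_K}.
\]

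\textbf{Step 2: left-to-right.} This direction is immediate. If $\varphi \in G^{f_K}(w)$, then $\truthset{\varphi}_{\Kmodel_K}$ is one of the sets being intersected, so it contains the intersection.

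\textbf{Step 3: right-to-left.} This is the only real step. The key observation is that
\[
    R[w] \subseteq \bigcap_{\psi \in G^{f_K}(w)} \truthset{\psi}_{\Kmodel_K}.
\]
This holds because, by Definition \ref{def:Kripke-to-closure-function}, each $\psi \in G^{f_K}(w)$ satisfies $R[w] \subseteq \truthset{\psi}_{\Kmodel_K}$. When $G^{f_K}(w)$ is empty the intersection is $W$ by convention, and the inclusion holds trivially. Now suppose the intersection is contained in $\truthset{\varphi}_{\Kmodel_K}$. Then $R[w] \subseteq \truthset{\varphi}_{\Kmodel_K}$, which is exactly the defining condition for $\varphi \in G^{f_K}(w)$.

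This is the argument sketched earlier for $H(w) = C_{\Kmodel_K}(H(w))$, and no step is a genuine obstacle. The only points needing care are that the truth sets used in \textsf{(KRep)} are computed in the closure model, which Lemma \ref{lem:Kripke-to-closure-preserving} handles, and the empty-intersection convention.
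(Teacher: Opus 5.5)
Your proposal is correct and matches the paper's proof essentially step for step. You transfer truth sets via Lemma~\ref{lem:Kripke-to-closure-preserving}, note that the left-to-right direction is immediate, and get the converse from $R[w]$ being contained in the intersection of the truth sets of all members of $G^{f_K}(w)$; your empty-intersection caveat is harmless but never needed, since $\top \in G^{f_K}(w)$ for every $w$.
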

\begin{proof}
    Let $w \in W$ and $\varphi \in \MF$. By Lemma \ref{lem:Kripke-to-closure-preserving}, the truth sets of formulas in $\Kmodel_K$ and $f_K(\Kmodel_K)$ coincide, i.e., we can freely use the fact that $\truthset{\varphi}_{\Kmodel_K} = \truthset{\varphi}_{f_K(\Kmodel_K)}$ for any $\varphi$.

    The left-to-right direction is straightforward. For the converse direction, suppose that
    \[
        \bigcap_{\psi\in\mathrm{id}(G^{f_K}(w))} \truthset{\psi}_{f_K(\Kmodel_K)} \subseteq \truthset{\varphi}_{f_K(\Kmodel_K)} .
    \]
    For every $\psi\in G^{f_K}(w)$, we have $R[w]\subseteq\truthset{\psi}_{\Kmodel_K}$. Hence,
    \[
        R[w]
        \subseteq
        \bigcap_{\psi\in G^{f_K}(w)}\truthset{\psi}_{\Kmodel_K}
        =
        \bigcap_{\psi\in\mathrm{id}(G^{f_K}(w))}\truthset{\psi}_{f_K(\Kmodel_K)}.
    \]
    Then, $R[w]\subseteq\truthset{\varphi}_{\Kmodel_K}$, and thus $\varphi\in G^{f_K}(w)=\mathrm{id}(G^{f_K}(w))$. Therefore, $f_K(\Kmodel_K)$ satisfies \textsf{(KRep)}.
\end{proof}

\begin{definition}
    Given a closure model $\Kmodel=(W,G,\cl,V)$, we define $g_K(\Kmodel)=(W,R^{g_K},V)$ where
    \[
        R^{g_K}[w]
        =        \bigcap_{\varphi\in\cl(G(w))}\truthset{\varphi}_{\Kmodel}.
    \]
    \label{def:closure-to-Kripke-function}
\end{definition}
Equivalently,
\[
    wR^{g_K}v
    \iff
    \Kmodel,v\models\varphi
    \text{ for every }\varphi\in\cl(G(w)).
\]

\begin{lemma}
    Let $\Kmodel$ be a closure model that satisfies \textsf{(KRep)}. Then, for any $w\in\Kmodel$ and $\varphi\in\MF$, $\Kmodel,w\models\varphi$ if and only if $g_K(\Kmodel),w\models\varphi$.
    \label{lem:closure-to-Kripke-preserving}
\end{lemma}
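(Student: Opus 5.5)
We argue by induction on the complexity of $\varphi$, exactly as in Lemmas \ref{lem:closure-to-neighborhood-preserving} and \ref{lem:closure-to-FMT-preserving}. The atomic case holds because $g_K$ keeps the valuation $V$ unchanged. The Boolean cases are immediate from the induction hypothesis. So the only substantive case is $\varphi = \Box\psi$. There the induction hypothesis gives $\truthset{\psi}_{\Kmodel} = \truthset{\psi}_{g_K(\Kmodel)}$, and we may use this identity freely.

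For the left-to-right direction, suppose $\Kmodel, w \models \Box\psi$, so that $\psi \in \cl(G(w))$. Let $v$ be any world with $wR^{g_K}v$. By Definition \ref{def:closure-to-Kripke-function}, $v$ satisfies every formula of $\cl(G(w))$, and in particular $\Kmodel, v \models \psi$. By the induction hypothesis, $g_K(\Kmodel), v \models \psi$. Hence $g_K(\Kmodel), w \models \Box\psi$. This direction uses only the definition of $R^{g_K}$, which matches the remark that the left-to-right half of \textsf{(KRep)} is trivial.

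For the right-to-left direction, suppose $g_K(\Kmodel), w \models \Box\psi$. Then every $v \in R^{g_K}[w]$ satisfies $\psi$ in $g_K(\Kmodel)$, so $R^{g_K}[w] \subseteq \truthset{\psi}_{g_K(\Kmodel)}$. Using the induction hypothesis and the definition of $R^{g_K}$, this becomes
\[
    \bigcap_{\chi\in\cl(G(w))}\truthset{\chi}_{\Kmodel} \subseteq \truthset{\psi}_{\Kmodel}.
\]
The nontrivial direction of \textsf{(KRep)} now yields $\psi \in \cl(G(w))$, that is, $\Kmodel, w \models \Box\psi$. Equivalently, one can argue contrapositively with the separation form of \textsf{(KRep)}: if $\psi \notin \cl(G(w))$, there is a world $v$ satisfying all of $\cl(G(w))$ but not $\psi$. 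Such a $v$ is an $R^{g_K}$-successor of $w$ refuting $\psi$.

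The main point needing care is that $\truthset{\chi}_{\Kmodel}$ is used for arbitrary $\chi \in \cl(G(w))$, and these $\chi$ may be more complex than $\psi$. No induction hypothesis is available for them. However, $R^{g_K}$ is defined purely in terms of truth sets in the original model $\Kmodel$, so the induction hypothesis is needed only for $\psi$ itself, and the argument goes through.
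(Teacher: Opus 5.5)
Your proof is correct and follows the paper's argument: induction on $\varphi$, where the left-to-right box case uses only the definition of $R^{g_K}$ and the right-to-left case uses the nontrivial half of \textsf{(KRep)}. The paper argues the latter contrapositively via the separation form, exactly as in your alternative remark, so your direct inclusion version is the same step read forwards.
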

\begin{proof}
    By induction on the complexity of $\varphi$. The base case and the Boolean cases are straightforward. We only prove the case $\varphi=\Box\psi$.

    For the left-to-right direction, suppose $\Kmodel, w \models \Box \psi$ and take an arbitrary $v$ such that $w R^{g_K} v$. From the former, $\psi \in \cl(G(w))$. From the definition of $R^{g_K}$, it follows that $\Kmodel, v \models \psi$, which implies $g_K(\Kmodel), v \models \psi$ by the induction hypothesis. Therefore, $g_K(\Kmodel), w \models \Box \psi$.

    The other direction is shown by using the contraposition. Suppose that $\Kmodel, w \notmodels \Box \psi$. Then, $\psi \notin \cl(G(w))$. From \textsf{(KRep)}, we have some $v \in \Kmodel$ such that $\Kmodel, v \models \chi$ for all $\chi \in \cl(G(w))$ and $\Kmodel, v \notmodels \psi$. Take such $v$, and we have $w R^{g_K} v$ and $\Kmodel, v \notmodels \psi$. From the latter together with the induction hypothesis, $g_K(\Kmodel), v \notmodels \psi$ holds. Therefore, we have $g_K(\Kmodel), w \notmodels \Box \psi$.
\end{proof}

Now, we show the correspondence between Kripke-representable models and Kripke models:
\begin{theorem}
    The translations $f_K: \mathbf{KM} \to \mathbf{CM}_\mathsf{KRep}$ and $g_K: \mathbf{CM}_\mathsf{KRep} \to \mathbf{KM}$ are truth-preserving.
    \label{thm:Kripke-closure-representation}
\end{theorem}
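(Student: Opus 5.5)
The plan is to assemble Theorem \ref{thm:Kripke-closure-representation} from the three lemmas just proved, in the same way Theorems \ref{thm:neighborhood-closure-representation} and \ref{thm:FMT-closure-representation} were assembled. Two things are needed for each direction. First, the translation must land in the stated target class. Second, it must preserve the truth of every modal formula at every world.

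For $f_K$, we first check that it is well defined as a map $\mathbf{KM} \to \mathbf{CM}_\mathsf{KRep}$. Given a Kripke model $\Kmodel_K = (W,R,V)$, the tuple $f_K(\Kmodel_K) = (W, G^{f_K}, \mathrm{id}, V)$ is a closure model: $W$ is non-empty, $G^{f_K}$ maps worlds to sets of formulas, and $\mathrm{id}$ is an operator on $\powerset(\MF)$. By Lemma \ref{lem:Kripke-has-KRep} it satisfies \textsf{(KRep)}, so it belongs to $\mathbf{CM}_\mathsf{KRep}$. Truth preservation is then exactly Lemma \ref{lem:Kripke-to-closure-preserving}.

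For $g_K$, let $\Kmodel \in \mathbf{CM}_\mathsf{KRep}$. By Definition \ref{def:closure-to-Kripke-function}, $R^{g_K}$ is a binary relation on the same non-empty set $W$, and the valuation is unchanged. Hence $g_K(\Kmodel)$ is a Kripke model, i.e.\ a member of $\mathbf{KM}$. Since $\Kmodel$ satisfies \textsf{(KRep)}, Lemma \ref{lem:closure-to-Kripke-preserving} gives $\Kmodel, w \models \varphi$ if and only if $g_K(\Kmodel), w \models \varphi$ for every $w$ and $\varphi$.

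All the substantive work sits in the lemmas. The only step requiring care was the right-to-left modal case of Lemma \ref{lem:closure-to-Kripke-preserving}. There the separation form of \textsf{(KRep)} supplies an $R^{g_K}$-successor falsifying $\psi$ whenever $\psi \notin \cl(G(w))$, and that is already established. One also needs $f_K$ to land in the right class, and Lemma \ref{lem:Kripke-has-KRep} handles this by the intersection argument over $R[w]$. So no step of the assembly itself should be an obstacle. As with Corollaries \ref{cor:neighborhood-closure-equivalence} and \ref{cor:FMT-closure-equivalence}, it then follows immediately that $\logicof(\mathbf{KM}) = \logicof(\mathbf{CM}_\mathsf{KRep})$.
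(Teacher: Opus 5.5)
Your proposal is correct and follows the paper's approach. The paper states this theorem right after Lemmas \ref{lem:Kripke-to-closure-preserving}, \ref{lem:Kripke-has-KRep}, and \ref{lem:closure-to-Kripke-preserving} and obtains it by combining them, just as it does for the neighborhood and FMT cases; your explicit check that $f_K(\Kmodel_K)$ and $g_K(\Kmodel)$ are well-formed models in the target classes is a harmless addition.
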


\begin{corollary}
    \[
        \logicof(\mathbf{KM}) =
        \logicof(\mathbf{CM_\mathsf{KRep}}) = \logicof(\mathbf{CM_\mathsf{GK}}).
    \]
    \label{thm:Kripke-closure-equivalence}
\end{corollary}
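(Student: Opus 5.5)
The plan is to prove the two equalities separately. The first, $\logicof(\mathbf{KM}) = \logicof(\mathbf{CM_\mathsf{KRep}})$, will come directly from the truth-preserving translations of Theorem \ref{thm:Kripke-closure-representation}. The second, $\logicof(\mathbf{CM_\mathsf{KRep}}) = \logicof(\mathbf{CM_\mathsf{GK}})$, will be obtained by connecting both sides to the logic $\mathbf{K} = L_\mathsf{GK}$.

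For the first equality, suppose $\varphi \in \logicof(\mathbf{KM})$ and let $\Kmodel \in \mathbf{CM}_\mathsf{KRep}$. By Lemma \ref{lem:closure-to-Kripke-preserving}, $g_K(\Kmodel)$ is a Kripke model with the same truth sets as $\Kmodel$. Since $g_K(\Kmodel) \models \varphi$, it follows that $\Kmodel \models \varphi$. Conversely, suppose $\varphi \in \logicof(\mathbf{CM_\mathsf{KRep}})$ and let $\Kmodel_K$ be a Kripke model. By Lemma \ref{lem:Kripke-has-KRep}, $f_K(\Kmodel_K)$ lies in $\mathbf{CM}_\mathsf{KRep}$, and by Lemma \ref{lem:Kripke-to-closure-preserving} it agrees with $\Kmodel_K$ on every formula. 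Hence $\Kmodel_K \models \varphi$.

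For the second equality, the inclusion $\logicof(\mathbf{CM_\mathsf{GK}}) \subseteq \logicof(\mathbf{CM_\mathsf{KRep}})$ is immediate. Lemma \ref{lem:KRep-implies-GK} gives $\mathbf{CM}_\mathsf{KRep} \subseteq \mathbf{CM}_\mathsf{GK}$ (with \textsf{(G)} read in its model-based form), and a larger class of models validates fewer formulas. The reverse inclusion cannot use a translation, since Example \ref{ex:GK-not-KRep} shows that \textsf{(G)} and \textsf{(K)} together do not yield \textsf{(KRep)}. Instead I will go through the logic. By the earlier proposition equating model-based and frame-based logics, $\logicof(\mathbf{CM_\mathsf{GK}}) = L_\mathsf{GK} = \mathbf{K}$. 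By the cited completeness of $\mathbf{K}$ for Kripke models, $\logicof(\mathbf{KM}) = \mathbf{K}$ as well. Combined with the first equality, this gives $\logicof(\mathbf{CM_\mathsf{KRep}}) = \mathbf{K} = \logicof(\mathbf{CM_\mathsf{GK}})$.

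The main point requiring care is this last step. Here the paper's convention that \textsf{(G)} now denotes $(\mathsf{G}_{\Kmodel})$ must be matched correctly with the earlier proposition, so that $\mathbf{CM}_\mathsf{GK}$ is understood as the model class $\mathbf{CM}_{\Lambda'}$ with $\Lambda' = \{\mathsf{G}_{\Kmodel}, \mathsf{K}\}$. If one prefers to avoid the external Kripke completeness result, there is an alternative. One can check directly that the canonical model $\Kmodel_\mathsf{GK}$ of Definition \ref{def:countermodel-full} satisfies \textsf{(KRep)}. Suppose $\varphi \notin \cl(G(\Gamma))$, so that $\Box\varphi \notin \Gamma$ by Lemmas \ref{lem:reflection-full} and \ref{lem:truth-lemma-full}. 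Then the standard $\mathbf{K}$ argument shows that $\{\psi \mid \Box\psi\in\Gamma\}\cup\{\neg\varphi\}$ is consistent. A maximal consistent extension of this set is the required separating world. This route yields $\logicof(\mathbf{CM_\mathsf{KRep}}) \subseteq \mathbf{K}$ internally.
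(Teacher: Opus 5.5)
Your proposal is correct and follows essentially the same route as the paper. You get the first equality from the truth-preserving translations $f_K$ and $g_K$, and the second by identifying both $\logicof(\mathbf{CM}_{\mathsf{GK}})$ and $\logicof(\mathbf{KM})$ with $L_{\mathsf{GK}}$ via soundness and completeness, correctly reading \textsf{(G)} in its model-based form. Your optional alternative, checking directly that the canonical model $\Kmodel_{\mathsf{GK}}$ satisfies \textsf{(KRep)} via the standard existence argument, is also sound; it makes the result self-contained and, through $g_K$, in effect re-derives Kripke completeness of $\mathbf{K}$ rather than citing it.
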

\vspace{-2.0em}
\begin{proof}
    The equality of the first two follows from the theorem above. By the soundness and completeness theorem for $L_{\mathsf{GK}}$ with respect to $\mathbf{CM_\mathsf{GK}}$ and Kripke models,  we have
    \[
        \logicof(\mathbf{CM_\mathsf{GK}})
        = \logicof(\mathbf{KM})
        = L_{\mathsf{GK}}. \qedhere
    \]
\end{proof} 
It is well known that the normal modal logic $\mathbf{K}$ can equivalently be axiomatized by the rule $\mathsf{RE}$ and axiom schemata $\mathsf{M}$, $\mathsf{C}$, and $\mathsf{N}$, so that $L_{GK}=L_{EMCN}$.
Therefore, by soundness and completeness,
we have $\logicof(\mathbf{CM}_\mathsf{KRep}) = \logicof(\mathbf{CM}_\mathsf{GK}) = \logicof(\mathbf{CM}_\mathsf{EMCN})$.

% --- 
Although \textsf{(KRep)} is a genuinely stronger model-theoretic condition than \textsf{(G)} plus \textsf{(K)}, they determine the same modal logic. This is analogous to the distinction between neighborhood models satisfying the properties corresponding to the axiom schemata \textsf{M}, \textsf{C}, \textsf{N}, and augmented neighborhood models: finite intersection and supplementation do not, in general, guarantee that a neighborhood is generated by a single core.

\section{Prospects}
\label{sec:prospects}

In this paper, we introduced closure semantics as a new semantic framework for weak modal logics.
As a basic result, we established completeness for a range of weak modal logics, including the least modal logic $\mathbf{ML}_0$.
We also gave translations between closure models and three existing possible-world semantics, together with conditions under which such translations are available.
These results, however, constitute only a first step toward a systematic study of closure semantics.

A first direction is to extend the completeness results to a wider range of modal principles.
For example, one may consider the familiar axioms $\mathsf{D}$, $\mathsf{T}$, $\mathsf{4}$, $\mathsf{B}$, and $\mathsf{5}$ from normal modal logic.
Beyond treating such axioms individually, it may be possible to develop a more general completeness theory.
In the completeness proofs for logics containing $\mathsf{C}$, $\mathsf{N}$, and $\mathsf{K}$, the canonical closure $\cl_\Lambda$ is generated by rules of a Horn-like form.
This suggests that, whenever the closure conditions corresponding to modal axioms can be expressed by a suitable class of Horn-type rules, completeness might be established uniformly by constructing the least closure satisfying those rules.

A second direction concerns definability and correspondence for closure semantics.
We have room to investigate which properties of closure models are definable by modal formulas, and conversely, which modal formulas correspond to natural conditions on closure models.
For normal modal logic, these questions have led to fundamental results such as the Goldblatt-Thomason theorem from a model-theoretic perspective and Sahlqvist theory from a syntactic perspective \cite{blackburn2001}.
It is therefore natural to ask whether analogous theories can be developed for closure semantics.

Closure semantics may provide a particularly flexible setting for such investigations.
The assignment $G(w)$ can be specified quite freely, independently of the truth of formulas at $w$, and only weak assumptions are imposed on $\cl$ itself.
This leaves considerable room for formulating structural conditions on closure frames and suggests that a wide variety of modal principles may admit corresponding closure conditions.
At the same time, however, it is necessary to distinguish carefully between conditions on closure frames and those on closure models.
For example, a direct condition corresponding to the axiom $\mathsf{T}: \Box \varphi \to \varphi$, would be
\[
    \varphi \in \cl(G(w)) \implies w \models \varphi.
\]
Since this condition refers explicitly to truth in a model, it is not itself a condition on closure frames.

A model theory for closure models also remains to be developed.
For Kripke models, structural operations on models, morphisms such as bounded morphisms, and bisimulation invariance form a well-developed theory.
It is natural to ask which of these ideas have meaningful counterparts in closure semantics.
For example, a map preserving modal truth would presumably require more than agreement on the truth values of propositional variables at corresponding worlds; it would also have to respect the associated generators $G$ and the closure operation $\cl$.
This is analogous to the case of $p$-morphisms, whose conditions concern not only truth at worlds but also the structure of the accessible worlds.

Questions concerning the finite model property, decidability, and computational complexity also remain open for broader classes of logics in closure semantics.
In the present paper, finite model constructions and primitive-recursive decidability were obtained only for logics without modal inference rules, and it is natural to ask how far these results can be extended.
A further problem is determining the complexity of validity and satisfiability in logics characterized by closure models.
Since a finite generator $G(w)$ may determine an infinite set $\cl(G(w))$, such complexity analyses may have to take into account not only the size of finite models but also the computational cost of the closure operation itself.

Finally, a more fundamental and difficult question remains: how should closure models themselves be interpreted?
Kripke semantics is accompanied by an intuitive interpretation of its mathematical structure: worlds are understood as possible alternatives, while accessibility represents a relation among them.
Closure semantics, by contrast, presently provides a formal truth definition without a comparably general interpretation of the roles played by the generator $G(w)$ and the operation $\cl$.
However, it may be too early to address this question directly.
The structural and computational results obtained along the way may eventually clarify what closure models represent.
In turn, this may pave the way toward a more intuitive understanding of closure semantics.

\section*{Acknowledgments}
I would like to thank Taishi Kurahashi and Mashu Noguchi for helpful discussions that greatly contributed to the development of this paper.

This research was supported by OpenAI through access to ChatGPT for Academic Researchers, which was used to assist with proofs, related arguments, and the preparation of the manuscript. All contents were independently checked by the author, who takes full responsibility for the final manuscript.
\printbibliography
\end{document}